\makeatletter \@addtoreset{equation}{section} \makeatother
\newtheorem{theorem}{Theorem}[section]
\newtheorem{corollary}{Corollary}[section]
\newtheorem{lemma}{Lemma}[section]
\newtheorem{proposition}{Proposition}[section]
\newtheorem{remark}{Remark}[section]
\newtheorem{assumption}{Assumption}[section]
\newtheorem{definition}{Definition}[section]
\newtheorem{algorithm}{Algorithm}
\newtheorem{example}{Example}[section]
\renewcommand*{\@biblabel}[1]{\hfill#1.}
\title{Behavioral Traps and the Equilibrium Problem on Hadamard Manifolds.}
\author{Bento, G. C.  \thanks{IME, Universidade Federal de Goi\'as, Goi\^ania, GO 74001-970, BR ({\tt glaydston@mat.ufg.br}).}
\and 
Cruz Neto, J. X. 
\thanks{CCN, DM, Universidade Federal do Piau\'i,
Terezina, PI 64049-550, BR ({\tt jxavier@ufpi.edu.br}).}
\and
Soares Jr, P.A. \thanks{CCN, DM, Universidade Etadual do Piau\'i, Terezina, PI 64002-150, BR ({\tt pedrosoares@uespi.br}) - {\bf Corresponding author.}
}
\and
Soubeyran, A.  \thanks{Aix-Marseille University (Aix-Marseille School of Economics), CNRS \&  EHESS, FR, ({\tt antoine.soubeyran@gmail.com})}
}
\begin{document}
\maketitle
\begin{abstract}
In this paper we present a sufficient condition for the existence of a solution for an \mbox{equilibrium} problem on an Hadamard manifold and under suitable assumptions on the sectional curvature, we \mbox{propose} a framework for the convergence analysis of a proximal point algorithm to solve this equilibrium \mbox{problem} in finite time. Finally we offer an application to  personal equilibrum problems as behavioral traps \mbox{problems}, using a recent \textquotedblleft variational rationality\textquotedblright \; approach of human behavior.

{\bf Keywords:} Proximal algorithms; equilibrium problem; Hadamard  manifold; finite termination; habits; routines; worthwhile changes.

\noindent{\bf AMS subject classification:} 65K05\,$\cdot$\, 47J25  $\cdot$\,90C33\,$\cdot$\,91E10.

\end{abstract}
\section{Introduction}
The equilibrium problem EP has been widely studied and is a very active field of research. One of the motivations is that various problems may be formulated as an equilibrium problem, for instance, \mbox{optimization} problems, Nash equilibria problems, complementarity problems, fixed point problems and variational inequality problems. An extensive development can be found in Blum and Oettli~\cite{BO1994}, Bianchi and Schaible~\cite{BS1996} and their references.

An important issue is under what conditions there exists a solution to EP. In the linear setting, several authors have provided results answering this question; see, for instance, Ky Fan~\cite{F1961}, Br\'{e}zis et al.~\cite{BLS1972}, Iusem and Sosa~\cite{IS2003} and Iusem et al.~\cite{IKS2009}. As far as we know,  Colao et al.~\cite{CLMM2012} were the first to provide an existence result for equilibrium problems in a Riemannian context, more accurately,  on Hadamard manifolds, in the case where EP is associated to a monotone bifunction which satisfies a certain coercivity condition. On this issue, following the ideas presented in~\cite{IKS2009},  we have presented a weaker sufficient condition than the coercivity assumption used in~\cite{CLMM2012}, to obtain the existence of solutions of EP. 

Recently, in~\cite{CLMM2012} the authors presented an iterative process, Picard iteration, to approximate a solution of the equilibrium problem on an Hadamard manifold which retrieves the proximal iteration of the proximal point method  studied, for example,  by Moudafi~\cite{M1999}, Konnov~\cite{K2003} and Iusem and Sosa~\cite{IS2010}, both in the linear setting. In this paper, we present a proximal algorithm for EP on an Hadamard manifold with null sectional curvature whose iterative process coincides with the proposed in~\cite[Theorem 4.10]{CLMM2012}. We point out that our contribution here is an extension of the convergence result presented in \cite{CLMM2012} to the case where the bifunction of EP is not necessarily monotone. Note that, under assumption of null sectional curvature, our algorithm retrieves the proximal point method for the variational inequalities problem presented by Tang et al.~\cite{Tang2013} and, hence, the proximal point method for minimization problems (see Ferreira and Oliveira~\cite{FO2002}) or, more generally, the proximal point method for vector fields, both on Hadamard manifolds, see Li et al.~\cite{LLMM2009}. In recent years,  extensions to Riemannian manifolds of concepts and techniques which fit in Euclidean spaces are natural. Some algorithms for solving variational inequalities and minimization problems which involve monotone operators have been extended from Hilbert space framework to more general setting of Riemannian manifolds; see Ferreira et al.~\cite{FLN2005}, \cite{FO2002}, Li et al.~\cite{LLM2010} and Wang et al.~\cite{WLML2010}. One reason for the success of techniques extension to the Riemannian setting is the possibility to transform nonconvex problems in linear context in convex problems by introducing a suitable metric; see  Cruz Neto et al.~  \cite{CFL2002} and Rapcs\'{a}k~\cite{R1997}. In regard to the minimization problem where the constrained set is the whole manifold, Bento and Cruz Neto in~\cite{BC2012} showed that the proximal point method has finite termination in the particular case where the objective function is convex and each minimizer is a weak sharp minimum (see Li et al.~\cite{LMWY2011} for a definition). Following the ideas presented by Moudafi in~\cite{M2007}, we present a condition about the bifunction $F$ that, in particular, retrieves the notion of weak sharp minima, and we prove the finite termination of any sequence generated from our iterative process. In particular, the finite termination result in~\cite{BC2012} is extended to minimization problems whose constrained set is not necessarily the  whole  manifold and we obtain a resulted of finite convergence to the proximal point methods for finding singularities of single valued monotone vector fields (see, for instance~\cite{LLMM2009} and Cruz Neto et al. \cite{CFPN2006}) and, hence, for the  variational inequality problem (see N\'{e}meth~\cite{N2003}) among others.

The organization of our paper is as follows. In Section 2, we give some elementary facts on \mbox{Riemannian} manifolds and convexity needed for reading of this paper. In Section 3, we present a sufficient \mbox{condition} for existence of a solution for the equilibrium problem on Hadamard manifolds under similar \mbox{conditions} required in the linear case. In Section 4, the proximal point algorithm for equilibrium problems on Hadamard manifolds is presented, convergence analysis is derived and, under mild assumption, a finite termination result is proved. In Section 5, we give a behavioral application  to the so called personal equilibrium problems where agents have reference dependent utility functions and variable reference points (see, K\H{o}szegi and Rabin~\cite{KR2006}), in the context of a recent and unifying approach of a lot of stability and change dynamics in
Behavioral Sciences, the \textquotedblleft Variational rationality" approach
of worthwhile stays and changes human behavior (see, Soubeyran~\cite{S2009,S2010}). This approach
focus the attention on three main concepts:
\begin{itemize}
\item [i)] worthwhile single changes, where, for an isolated agent or several
interrelated agents, their motivation to change from the current position to
a new position is higher than some adaptive and satisficing worthwhile to
change ratio, time their resistance to change. Motivation to change refers
to the utility of advantages to change, while resistance to change refers to
the disutility of inconvenients to change. Resistance to change includes
inertia, frictions, obstacles, difficulties to change, costs to be able to
change and inconvenient to change;
\item [ii)] worthwhile transitions, i.e, succession of worthwhile single stays and
changes;
\item [iii)] traps, which can be stationary or variational. A trap is stationary
when, starting for it, no feasible change is worthwhile. An equilibrium
appears to be a very particular stationary trap, in a world with no
resistance to change, when only motivation to change matters. In this case
the agent has no motivation to change (no advantage to change, i.e only
losses to change), and zero resistance to change. A trap is variational with
respect to a subset of initial positions, when it is stationary, and,
starting from any of these initial positions, agents can find a succession
of worthwhile single changes and temporary stays which converge to this stationary
trap. Then, a variational trap is rather easy to reach and difficult to
leave in a worthwhile way. Furthermore, traps can be weak or strong,
depending of large or strict inequalities.
\end{itemize}
This last section, devoted to applications, focus the attention on a
succession of worthwhile changes and stays, moving from a weak stationary trap to a new one, given that the agent can change, each step, his satisficing
worthwhile to change ratio. The approximation process presented in~\cite{CLMM2012} represents a nice instance of such a worthwhile stability and change dynamic. The result of this paper shows that this dynamic converges to an equilibrium (a personal equilibrium for personal equilibrium problems), which is supposed, a priori, to be a weak variational trap.
This last hypothesis is the essence, in the variational rationality
approach, of the so called famous weak sharp minimum condition. It allows
convergence in finite time. Because, in the long run, we are dead, finite
convergence to a trap is a fundamental property of any human behavior
(defined as a sequence of actions). This worthwhile stability and change
dynamic is a very important benchmark case of the more general stability and
change dynamic (see \cite{S2009,S2010}), where a succession of worthwhile
changes move from a position to a new one (which are not supposed to be
stationary traps) and converges to an end point, which is shown to be a
variational trap (this is not an hypothesis). Finally, Section 6 contains
concluding discussions of the main results obtained in the paper.

\section{Preliminary}
\subsection{Riemanian Geometry}\label{sec2}
In this section, we recall some fundamental and basic concepts needed for reading this paper. These results and concepts can be found in the books on Riemannian geometry, see do Carmo~\cite{MP1992} and Sakay~\cite{S1996}.

Let $M$ be a $n$-dimensional connected manifold. We denote by $T_xM$ the $n$-dimensional {\it tangent space} of $M$ at $x$, by $TM=\cup_{x\in M}T_xM$ {\itshape{the tangent bundle}} of $M$ and by ${\cal X}(M)$ the space of smooth vector fields over $M$. When $M$ is endowed with a Riemannian metric $\langle .\,,\,. \rangle$, with the corresponding norm denoted by $\| . \|$, then $M$ is a Riemannian manifold. Recall that the metric can be used to define the length of piecewise smooth curves $\gamma:[a,b]\rightarrow M$ joining $x$ to $y$, i.e., $\gamma(a)=x$ and $\gamma(b)=y$, by $l(\gamma):=\int_a^b\|\gamma^{\prime}(t)\|dt$, 
and moreover, by minimizing this length functional over the set of all such curves, we obtain a Riemannian distance $d(x,y)$ inducing the original topology on $M$. We denote by $B(x,\epsilon)$ the Riemannian ball on $M$ with center $x$ and radius $\epsilon>0$. 
 A vector field $V$ along $\gamma$ is said to be {\it parallel} iff $\nabla_{\gamma^{\prime}} V=0$. If $\gamma^{\prime}$ itself is parallel we say that $\gamma$ is a {\it geodesic}. Given that the geodesic equation $\nabla_{\ \gamma^{\prime}} \gamma^{\prime}=0$ is a second-order nonlinear ordinary differential equation, we conclude that the geodesic \mbox{$\gamma=\gamma _{v}(.,x)$} is determined by its position $x$ and velocity $v$ at $x$. It is easy to verify that $\|\gamma ^{\prime}\|$ is constant. We say that $ \gamma $ is {\it normalized} iff $\| \gamma ^{\prime}\|=1$. The restriction of a geodesic to a closed bounded interval is called a {\it geodesic segment}.  Given points $x,y \in M$, we denote the geodesic segment from $x$ to $y$ by $[x,y]$. 
 We usually do not distinguish between a geodesic and its geodesic segment, as no confusion can arise. A geodesic segment joining $x$ to $y$ in $ M$ is said to be {\it minimal} iff its length equals $d(x,y)$ and the geodesic in question is said to be a {\it minimizing geodesic}. 

A Riemannian manifold is {\it complete} iff the geodesics are defined for any values of $t$. The \mbox{Hopf-Rinow's} Theorem~(\cite[Theorem 2.8, page 146]{MP1992} or~\cite[Theorem 1.1, page 84]{S1996}) asserts that, if this is the case, then any pair of points in $M$ can be joined by a (not necessarily unique) minimal geodesic segment. Moreover, $( M, d)$ is a complete metric space and, bounded and closed subsets are compact. From the completeness of the Riemannian manifold $M$, the {\it exponential map} $\exp_{x}:T_{x}  M \to M $ is defined by $\exp_{x}v\,=\, \gamma _{v}(1,x)$, for each $x\in M$.  A complete simply-connected Riemannian manifold of nonpositive sectional curvature is called an Hadamard manifold.  It is known that if $M$ is a Hadamard manifold, then $M$ has the same topology and differential structure as the Euclidean space $\mathbb{R}^n$; see, for instance,~\cite[Lemma 3.2, page 149]{MP1992} or \cite[Theorem 4.1, page 221]{S1996}. Furthermore, are known some similar geometrical properties to the existing in Euclidean space $\mathbb{R}^n$, such as, given two points there exists an unique geodesic segment that joins them. Now, we present a geometric property which will be very useful in the convergence analysis.

Let us recall that a {\it geodesic triangle \/} $\Delta (x_{1}x_{2}x_{3})$ of a Riemannian manifold is the set consisting of three distinct points $x_{1}$,$x_{2}$ ,$x_{3}$ called the {\it vertices \/}
and three minimizing geodesic segments $\gamma _{i+1}$ joining $x_{i+1}$ to $x_{i+2}$ called 
the {\it sides \/}, where $i=1,2,3 (mod\, 3) $.
\begin{theorem}\label{T:lawcos}
Let $M$ be a Hadamard manifold, $\Delta (x_{1}x_{2}x_{3})$ a geodesic triangle and 
$\gamma _{i+1}:[0,l_{i+1}]\to M $ geodesic segments joining $x_{i+1}$ to $x_{i+2}$ and set
$ l_{i+1}:=l(\gamma _{i+1}) $, 
$\theta _{i+1}= \, < \!\!\!)\,\left({\gamma '}_{i+1}(0),\, -{\gamma '}_{i}(l_{i})\right)$,
for $i=1,2,3$ $(mod\; 3)$. Then,
\begin{equation*} \label{E:sumangles}
\theta _{1} + \theta _{2} + \theta _{3} \leqslant \pi 
\end{equation*}
\begin{equation*} \label{E:lawcos}
l^{2}_{i+1}+l^{2}_{i+2}-2l_{i+1}l_{i+2}cos\theta _{i+2}\leqslant l^{2}_{i},
\end{equation*}
\begin{equation} \label{E:2lawcos}
d^2(x_{i+1},x_{i+2})+ d^2(x_{i+2}, x_{i})-2 \langle \exp^{-1}_{x_{i+2}}x_{i+1},\exp^{-1}_{x_{i+2}}x_{i}\rangle\leq d^2(x_{i},x_{i+1}).
\end{equation}
\end{theorem}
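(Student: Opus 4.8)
The plan is to prove first the law-of-cosines inequality (the second displayed inequality, which is just \eqref{E:2lawcos} rewritten) and then to deduce from it both the restatement in terms of $\exp^{-1}$ and the angle-sum inequality. First note that since $M$ is Hadamard the map $\exp_{x_{i+2}}$ is a diffeomorphism, so $\exp^{-1}_{x_{i+2}}x_{i+1}$ and $\exp^{-1}_{x_{i+2}}x_i$ are well defined; they are the initial velocities, of the correct lengths $l_{i+1}=d(x_{i+1},x_{i+2})$ and $l_{i+2}=d(x_{i+2},x_i)$, of the minimizing geodesics issuing from $x_{i+2}$, and the angle between them is exactly $\theta_{i+2}$. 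Hence $\langle \exp^{-1}_{x_{i+2}}x_{i+1},\exp^{-1}_{x_{i+2}}x_i\rangle = l_{i+1}l_{i+2}\cos\theta_{i+2}$, so that \eqref{E:2lawcos} and the inequality preceding it are literally the same statement, and it suffices to establish one of them.

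The key analytic tool will be the convexity of the squared distance function. Fix $p=x_{i+1}$ and set $\rho:=\tfrac12\,d^2(p,\cdot)$; because $M$ is Hadamard and has a unique geodesic between any two points, $\rho$ is smooth on all of $M$. I will invoke the Rauch comparison theorem (see do Carmo~\cite{MP1992} or Sakay~\cite{S1996}): since the sectional curvature of $M$ is nonpositive, Jacobi fields along radial geodesics from $p$ grow at least as fast as in $\mathbb{R}^n$, which yields the Hessian estimate $\operatorname{Hess}\rho(u,u)\ge \|u\|^2$ for every tangent vector $u$. Concretely, writing $r=d(p,\cdot)$ one has $\operatorname{Hess}\rho=dr\otimes dr+r\,\operatorname{Hess} r$, and the comparison theorem gives $r\,\operatorname{Hess} r(u,u)\ge\|u\|^2-\langle\operatorname{grad} r,u\rangle^2$, whence $\operatorname{Hess}\rho(u,u)\ge\|u\|^2$.

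With this in hand I would run a one-dimensional Taylor argument along the side opposite to $p$. Let $\sigma=\gamma_{i+2}$ be the unit-speed minimizing geodesic with $\sigma(0)=x_{i+2}$ and $\sigma(l_{i+2})=x_i$, and set $g(t):=\rho(\sigma(t))$. Then $g(0)=\tfrac12 l_{i+1}^2$ and $g(l_{i+2})=\tfrac12 l_i^2$, and since $\operatorname{grad}\rho(q)=-\exp^{-1}_q p$ I obtain $g'(0)=\langle-\exp^{-1}_{x_{i+2}}x_{i+1},\sigma'(0)\rangle=-l_{i+1}\cos\theta_{i+2}$. Because $\sigma$ is a geodesic, $g''(t)=\operatorname{Hess}\rho(\sigma'(t),\sigma'(t))\ge 1$ by the estimate above. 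Inserting these into the second-order Taylor formula $g(l_{i+2})=g(0)+g'(0)\,l_{i+2}+\int_0^{l_{i+2}}(l_{i+2}-t)\,g''(t)\,dt$ gives $\tfrac12 l_i^2\ge \tfrac12 l_{i+1}^2-l_{i+1}l_{i+2}\cos\theta_{i+2}+\tfrac12 l_{i+2}^2$, which is precisely the law-of-cosines inequality after multiplying by $2$.

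Finally I would deduce the angle-sum inequality by comparison with a Euclidean triangle. Since $d$ is a metric and the vertices are distinct, $l_1,l_2,l_3$ are positive and satisfy the triangle inequalities, so there exists a triangle in $\mathbb{R}^2$ with these side lengths; calling its angles $\bar\theta_i$ we have $\bar\theta_1+\bar\theta_2+\bar\theta_3=\pi$ and each $\bar\theta_{i+2}$ satisfies the Euclidean law of cosines with equality. Dividing the manifold inequality by $2l_{i+1}l_{i+2}>0$ gives $\cos\theta_{i+2}\ge\cos\bar\theta_{i+2}$, hence $\theta_{i+2}\le\bar\theta_{i+2}$ since cosine is decreasing on $[0,\pi]$; summing over $i=1,2,3$ yields $\theta_1+\theta_2+\theta_3\le\pi$. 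I expect the main obstacle to be the Hessian comparison step: establishing $\operatorname{Hess}\rho\ge\mathrm{Id}$ rigorously requires the Jacobi-field/Rauch machinery together with the fact that $\rho$ is genuinely smooth, which holds here precisely because the Hadamard hypothesis rules out cut points; everything after that reduces to one-variable calculus and elementary Euclidean trigonometry.
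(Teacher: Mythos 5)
Your proposal is correct, but it is worth noting that the paper does not actually prove this theorem: its ``proof'' is a one-line citation to Sakai \cite[Theorem 4.2, page 161]{S1996}, so you have supplied a genuine argument where the paper supplies none. Your route is the standard comparison-geometry one, and every step checks out: the identity $\langle \exp^{-1}_{x_{i+2}}x_{i+1},\exp^{-1}_{x_{i+2}}x_{i}\rangle=l_{i+1}l_{i+2}\cos\theta_{i+2}$ is legitimate because on a Hadamard manifold minimizing geodesics are unique, so the two displayed cosine laws really are the same statement; the Hessian bound $\operatorname{Hess}\rho\geq \mathrm{Id}$ for $\rho=\tfrac12 d^{2}(p,\cdot)$ follows from the Hessian/Rauch comparison in nonpositive curvature exactly as you say; the Taylor expansion with integral remainder along the unit-speed side $\sigma=\gamma_{i+2}$ delivers \eqref{E:2lawcos}; and the Euclidean comparison triangle plus monotonicity of cosine on $[0,\pi]$ gives the angle-sum bound. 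Two small technical points deserve a sentence in a polished write-up. First, the decomposition $\operatorname{Hess}\rho=dr\otimes dr+r\,\operatorname{Hess}r$ is valid only away from $p$, where $r=d(p,\cdot)$ is smooth; since $\rho$ itself is smooth on all of $M$ with $\operatorname{Hess}\rho(p)=\mathrm{Id}$, the bound $g''\geq 1$ holds even if the (possibly degenerate) triangle puts $p$ on the opposite side, so the Taylor argument is unharmed, but you should say so. Second, the Euclidean comparison triangle exists only because the \emph{weak} triangle inequalities hold, and it may be degenerate when one of them is an equality; the angles $\bar\theta_i$ are still well defined and sum to $\pi$ in that case, so the conclusion stands. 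With those caveats noted, your proof is complete and self-contained, which is arguably more informative than the paper's citation.
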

\begin{proof}
See, for example, \cite[Theorem 4.2, page 161]{S1996}.
\end{proof}

As mentioned in~\cite{CLMM2012} (see Bridson and Haefliger~\cite{BH1999}), it follows that:
\begin{equation}\label{ineq1}
d^2(x,y)\leq \langle \exp^{-1}_xz,\exp^{-1}_xy\rangle + \langle \exp^{-1}_yz,\exp^{-1}_yx\rangle,\qquad x, y, z\in M.
\end{equation}

{\it In this paper, all manifolds $M$ are assumed to be Hadamard and finite dimensional}.

\subsection{Convexity} \label{sec3}


A set $\Omega\subset M$ is said to be {\it convex}  iff any
geodesic segment with end points in $\Omega$ is contained in
$\Omega$, that is, iff $\gamma:[ a,b ]\to M$ is a geodesic such that $x =\gamma(a)\in \Omega$ and $y =\gamma(b)\in \Omega$, then $\gamma((1-t)a + tb)\in \Omega$ for all $t\in [0,1]$. Given $\mathcal{B}\subset M$, we denote by $\mbox{conv}(\mathcal{B})$ the convex {\it hull} of  $\mathcal{B}$, that is, the smallest convex subset of $M$ containing $\mathcal{B}$. Let $\Omega\subset M$ be a convex set. A function $f:\Omega\to\mathbb{R}$ is said to be {\it convex} iff for any geodesic segment $\gamma:[a, b]\to \Omega$ the composition $f\circ\gamma:[a,b]\to\mathbb{R}$ is convex. Take $p\in \Omega$. A vector $s \in T_pM$ is said
to be a {\it subgradient} of $f$ at $p$ iff
\begin{equation}\label{sgrad}
f(q) \geq f(p) + \langle s,\exp^{-1}_pq\rangle,\qquad q\in \Omega.
\end{equation}
The set of all subgradients of $f$ at $p$, denoted  by $\partial f(p)$, is called the { \it subdifferential} of $f$ at $p$. It is known that if $f$ is convex and $M$ is an Hadamard manifold, then $\partial f(p)$ is a nonempty set, for each $p\in \Omega$; see Udriste~\cite[Theorem 4.5, page 74]{U1994}. 

Let $\mathcal{B}\subset M$ be a non-empty, convex and closed  set. The distance function associated with $\mathcal{B}$ is given by 
\[
M\ni x \longmapsto d_{\mathcal{B}}(x):=\inf\{d(y,x): y\in \mathcal{B}\}\in \mathbb{R}_+.
\]
It is well-known (see~\cite[Corollary 3.1]{FO2002}) that for each $x\in M$ there exists a unique element $\tilde{x}\in \mathcal{B}$ such that 
\[
\langle \exp^{-1}_{\tilde{x}}x,\exp^{-1}_{\tilde{x}}y\rangle\leq 0,\qquad y\in \mathcal{B}.
\]
In this case, $\tilde{x}$ is the projection of $x$ onto the set $\mathcal{B}$ which we will denote by $P_{\mathcal{B}}(x)$.


\begin{remark}\label{rem2} 
It is important to mention that for every $y \in M$, $x\mapsto d(x,y)$ is a continuous and convex function, see~\emph{\cite[Proposition 4.3, page 222]{S1996}}. 
\end{remark}

\section{Equilibrium Problem}\label{sec3}

In this section, following the ideas shown in~\cite{IKS2009}, we present a sufficient condition for the existence of solution of equilibrium problems on Hadamard manifolds. We chose to present a proof only for the main result. With the exception to the proof of Proposition~\ref{prop1}, the proof of the other results can be extended, from those presented in linear environments (see \cite{IKS2009,IS2003}), with minor adjustments to the nonlinear context of this paper.

From now on, $\Omega \subset M$ will denote a nonempty closed convex set, unless explicitly stated otherwise. Given a bifunction $F: \Omega\times \Omega \rightarrow \mathbb{R}$ satisfying the property $F(x,x) =0$, for all $x \in \Omega$, the {\it equilibrium problem} in the Riemannian context (denoted by EP) consists in:
\begin{equation}\label{ep}
\mbox{Find $x^{*}\in \Omega$}:\quad F(x^\ast,y) \geq 0,\qquad \forall \ y \in \Omega.
\end{equation}
In this case, the bifunction $F$ is called an {\it equilibrium bifunction}. 
As far as we know, this problem was considered  firstly, in this context in~\cite{CLMM2012}, where the authors pointed out important problems, which are retrieved from  \eqref{ep}. Particularly, given $V\in \mathcal{X}(M)$, if 
\begin{equation}\label{eq:VIP}
F(x,y)= \langle V(x), \exp^{-1}_{x}y \rangle,\qquad \forall \ x, y\in \Omega,
\end{equation}
\eqref{ep} reduces to the variational inequality problem; see, for instance~\cite{N2003}. 
\begin{remark}
Although  the Variational Inequality Theory provides us a toll for formulating a variety of equilibrium problems, Iusem and Sosa ~\emph{\cite[Proposition 2.6]{IS2003}} showed that the generalization given by EP formulation  with respect to  VIP (Variational Inequality Problem) is genuina, in the sense that there are EP problems which do not fit the format of VIP. We affirm that is possible to guarantee the genuineness given by EP formulation compared to VIP, by considering the important class given by Quasi-Convex Optimization Problems that appear, for instance, in many micro-economical models devoted to maximize utilities. Indeed, the absence of convexity allows us to obtain situations where this important class of problems can not be considered as a VIP in the sense that their possible representation in this format would lead us to a problem, whose solution set contains points that do not necessarily belong to the solution set of the original optimization problem. On the other hand even in the absence of convexity, this class of problems  can be placed on the EP format.
\end{remark}

\begin{definition}
Let $F: \Omega\times \Omega \rightarrow \mathbb{R}$ be a bifunction. $F$ is said to be
\begin{itemize}
\item[\emph{(1)}] {\it monotone} iff  $F(x,y)+ F(y,x)\leq 0$, for all $(x,y)\in \Omega\times \Omega$;
\item[\emph{(2)}] {\it pseudomonotone} iff,  for each $(x,y)\in \Omega\times \Omega$, $F(x,y)\geq 0$ implies $F(y,x)\leq 0$;
\item[\emph{(3)}] {\it $\theta$-undermonotone} iff, there exists $\theta\geq 0$ such that, $F(x,y)+ F(y,x)\leq \theta d^2(x,y)$,  for all $(x,y)\in \Omega\times \Omega$.
\end{itemize}   
\end{definition}

\begin{remark}\label{rem31}$\;$
\begin{itemize}
\item [i)] Clearly, monotonicity implies pseudomonotonicity, but the converse does not hold even in a linear context, see, for instance, Iusem and Sosa~\emph{\cite{IS2003}}.
\item [ii)] If $F$ is pseudomonotone and, for $\tilde{x},\tilde{y}\in\Omega$,   $F(\tilde{x},\tilde{y})>0$ implies $F(\tilde{y},\tilde{x})<0$. Indeed, let us suppose, for contradiction, that $F(\tilde{y},\tilde{x})=0$ (in particular $F(\tilde{y},\tilde{x})\geq 0$). From the pseudomonotonicity of $F$ it follows that $F(\tilde{x},\tilde{y})\leq 0$, which is an absurdity, and the affirmation is proved.
\end{itemize}
\end{remark}


Next result was presented by Colao et al. in~\cite{CLMM2012} and is fundamental to establish our existence result for the EP.
\begin{proposition}\label{prop1}
Let $\mathcal{B}\subset M$ be  a closed convex subset and  $H: \mathcal{B}\to 2^{\mathcal{B}}$ be a mapping such that, for each $y \in \mathcal{B}$, $H(y)$ is closed. Suppose that
\begin{itemize}
\item[\emph{(i)}] there exists $y_0 \in \mathcal{B}$ such that $H(y_0)$ is compact;
\item[\emph{(ii)}] $\forall  y_1,\ldots, y_m \in \mathcal{B}$, $\emph{conv} ( \{ y_1,\ldots, y_m\})\subset\bigcup_{i=1}^mH(y_i)$.
\end{itemize}
Then, \[\bigcap_{y\in \mathcal{B}} H(y)\neq \varnothing.\]
\end{proposition}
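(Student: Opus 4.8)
This is a Knaster--Kuratowski--Mazurkiewicz (KKM) type lemma, so the plan is to reproduce the classical finite--intersection argument, with Euclidean convex combinations replaced by Riemannian barycenters. The plan is first to reduce the conclusion to a finite intersection property. Since each $H(y)$ is closed and $H(y_0)$ is compact, the sets $H(y)\cap H(y_0)$ form a family of closed subsets of the compact space $H(y_0)$. If I can show that every finite subfamily $\{H(y_i)\cap H(y_0)\}_{i=1}^m$ has nonempty intersection, then compactness of $H(y_0)$ forces $\bigcap_{y\in\mathcal B}\bigl(H(y)\cap H(y_0)\bigr)\neq\varnothing$; as $y_0\in\mathcal B$, this set equals $\bigcap_{y\in\mathcal B}H(y)$, which is the desired conclusion. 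Hence it suffices to prove that for arbitrary $y_1,\dots,y_m\in\mathcal B$ one has $\bigcap_{i=1}^m H(y_i)\neq\varnothing$.

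To establish this finite intersection property I would fix $y_1,\dots,y_m$ and work inside the compact convex set $\Delta=\overline{\mathrm{conv}}(\{y_1,\dots,y_m\})$; it is compact because the closed ball is convex (Remark~\ref{rem2}), so $\Delta$ is a closed subset of a compact ball. I first upgrade hypothesis (ii): since each $H(y_i)$ is closed, a finite union $\bigcup_{i\in I}H(y_i)$ is closed, so taking closures in (ii) gives $\overline{\mathrm{conv}}(\{y_i:i\in I\})\subseteq\bigcup_{i\in I}H(y_i)$ for every $I\subseteq\{1,\dots,m\}$. The Riemannian substitute for a convex combination is the barycenter map $\beta:\sigma\to\Delta$ from the standard simplex $\sigma=\{t\in\mathbb R^m:t_i\ge 0,\ \sum_i t_i=1\}$, where $\beta(t)$ is the unique minimizer over $M$ of $x\mapsto\sum_{i=1}^m t_i\,d^2(x,y_i)$. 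Existence and uniqueness rest on the Hadamard fact that $d^2(\cdot,p)$ is strictly geodesically convex and coercive; $\beta$ depends continuously on $t$, and a projection argument (projecting onto the convex set $\overline{\mathrm{conv}}(\{y_i:t_i>0\})$ does not increase any $d(\cdot,y_i)$) shows $\beta(t)\in\overline{\mathrm{conv}}(\{y_i:t_i>0\})$. With $\beta$ in hand I set $\lambda_i(t)=d\bigl(\beta(t),H(y_i)\bigr)$, which is continuous since $H(y_i)$ is closed.

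Then I would define the continuous self--map $T:\sigma\to\sigma$ by
\[
T(t)_i=\frac{t_i+\lambda_i(t)}{1+\sum_{j=1}^m\lambda_j(t)},\qquad i=1,\dots,m,
\]
(the components are nonnegative and sum to $1$, and the denominator is at least $1$), and apply Brouwer's fixed point theorem to obtain $t^\ast$ with $T(t^\ast)=t^\ast$. Writing $S=\sum_j\lambda_j(t^\ast)$, the fixed--point relation simplifies to $t_i^\ast\,S=\lambda_i(t^\ast)$ for all $i$. If $S>0$, then $\lambda_i(t^\ast)>0$ holds exactly on $I=\{i:t_i^\ast>0\}$; but $\beta(t^\ast)\in\overline{\mathrm{conv}}(\{y_i:i\in I\})\subseteq\bigcup_{i\in I}H(y_i)$ by the upgraded (ii), so $\lambda_{i_0}(t^\ast)=0$ for some $i_0\in I$, contradicting $\lambda_{i_0}(t^\ast)>0$. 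Hence $S=0$, so $\lambda_i(t^\ast)=d(\beta(t^\ast),H(y_i))=0$ for every $i$, meaning $\beta(t^\ast)\in\bigcap_{i=1}^m H(y_i)$, which proves the finite intersection property.

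The reduction by compactness and the Brouwer step are routine; I expect the main obstacle to be the manifold--specific construction of $\beta$. Concretely, the delicate points are verifying that the weighted squared--distance functional has a unique minimizer (strict convexity of $d^2$ on a Hadamard manifold), that $\beta$ is continuous in $t$, and—crucially—that $\beta(t)$ lands in the closed geodesic convex hull of the active vertices so that hypothesis (ii) can be invoked. These, together with the compactness of $\overline{\mathrm{conv}}(\{y_1,\dots,y_m\})$, are where the Riemannian setting requires genuine care beyond the linear case.
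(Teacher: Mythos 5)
Your proof is correct, but note that the paper does not actually prove Proposition~\ref{prop1}: its ``proof'' is just the citation to Colao et al.~\cite{CLMM2012}, where this statement is the KKM-type lemma. The argument there shares your first step (compactness of $H(y_0)$ reduces everything to the finite intersection property), but handles the finite case differently: in essence, one transfers the problem to the Euclidean simplex $\sigma$ by building a continuous map $\varphi\colon\sigma\to\mathrm{conv}(\{y_1,\ldots,y_m\})$ from iterated geodesic joins, which sends each face of $\sigma$ into the convex hull of the corresponding vertices; the pulled-back sets $\varphi^{-1}(H(y_i))$ are then a classical KKM family in $\sigma$, and the classical KKM lemma finishes. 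You instead re-derive the KKM principle directly on $M$ from Brouwer, with the Riemannian barycenter $\beta$ replacing convex combinations. Your route is self-contained but carries the extra analytic load you rightly flag, and that load can be discharged: $d^2(\cdot,p)$ is strongly geodesically convex and coercive on an Hadamard manifold (giving existence, uniqueness and continuity of $\beta$), and your localization $\beta(t)\in\overline{\mathrm{conv}}(\{y_i:t_i>0\})$ does follow from the obtuse-angle property of the projection combined with the law of cosines \eqref{E:2lawcos}, which yield $d(P_C(x),y)\leq d(x,y)$ for $y\in C$ and strict decrease of the weighted objective when $x\notin C$. Your closure upgrade of hypothesis (ii) is exactly what compensates for the barycenter being known to lie only in the \emph{closed} hull, whereas the iterated-geodesic map of \cite{CLMM2012} lands in the hull itself, so no closures are needed there; this trade-off (elementary simplex transfer versus barycenter analytics) is the real difference between the two proofs. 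Two small points you left implicit: (ii) with $m=1$ gives $y_i\in H(y_i)$, so each $H(y_i)$ is nonempty and $\lambda_i$ is well defined; and continuity of $\lambda_i$ needs only nonemptiness (distance to a set is $1$-Lipschitz), closedness being what converts $\lambda_i(t^{\ast})=0$ into membership at the end.
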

\begin{proof}
See~\cite{CLMM2012}.
\end{proof}
Unless stated to the contrary, in the remainder of this paper we assume that $F:\Omega\times \Omega\to\mathbb{R}$ is a  bifuntion satisfying  the following assumptions: 
\begin{itemize}
\item[$\mathcal{H}$1)] $F(x,x)=0$ for each $x\in \Omega$;
\item[$\mathcal{H}$2)] For every $x\in \Omega$,  $y\mapsto F(x,y)$ is convex and lower semicontinuous;
\item[$\mathcal{H}$3)] For every $y\in \Omega$,  $x\mapsto F(x,y)$ is upper semicontinuous.
\end{itemize}


For each $y\in \Omega$, let us define:
\[
L_F (y) := \{x\in \Omega : F(y, x)\leq 0\}.
\]
From this set, we can consider the following {\it convex feasibility problem} (denoted by CFP):
\[
\mbox{Find $x^{*}$}\in \bigcap_{y\in \Omega} L_F(y).
\]
As far as we know, this problem was first studied, in the Riemannian context, by Bento and Melo in~\cite{Bento2012}, in the particular case where the domain of $F$ is $M\times \{1,\ldots, m\}$. In this case, $y\in\{1,\ldots, m\}$ and $\Omega$ is the whole $\mbox{M}$. 


Next result establishes a relationship between CFP and EP.
\begin{lemma}\label{rem3}
The solution set of \emph{CFP} is contained in the solution set of \emph{EP}.
\end{lemma}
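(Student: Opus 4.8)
The plan is to unwind both definitions and show that a feasibility solution, which by definition satisfies $F(y,x^{*})\le 0$ for every $y\in\Omega$, must also satisfy $F(x^{*},y)\ge 0$ for every $y\in\Omega$; the bridge between these two inequalities is the convexity of $F$ in its second argument together with the normalization $F(x,x)=0$. First I would fix an arbitrary $y\in\Omega$ and join it to $x^{*}$ by the geodesic $\gamma:[0,1]\to M$, $\gamma(t)=\exp_{x^{*}}\!\big(t\,\exp^{-1}_{x^{*}}y\big)$, which stays inside $\Omega$ because $\Omega$ is convex. Writing $y_{t}:=\gamma(t)\in\Omega$, the feasibility of $x^{*}$ (that is, $x^{*}\in\bigcap_{z\in\Omega}L_F(z)$) gives $F(y_{t},x^{*})\le 0$ for every $t\in[0,1]$.

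The heart of the argument is to feed $y_{t}$ back into the convexity assumption $\mathcal{H}$2. Since $s\mapsto F(y_{t},\gamma(s))$ is convex on $[0,1]$ and $\gamma(t)=y_{t}$ is the point of parameter $t$ on the geodesic joining $\gamma(0)=x^{*}$ to $\gamma(1)=y$, convexity yields
\[
F(y_{t},y_{t})\le (1-t)\,F(y_{t},x^{*})+t\,F(y_{t},y).
\]
Now invoking $\mathcal{H}$1 to replace the left-hand side by $0$, and using $F(y_{t},x^{*})\le 0$ to discard the nonpositive first term on the right, one is forced to conclude $t\,F(y_{t},y)\ge 0$, hence $F(y_{t},y)\ge 0$ for every $t\in(0,1]$.

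It remains to pass to the limit $t\to 0^{+}$. As $t\to 0^{+}$ we have $y_{t}\to x^{*}$, so the upper semicontinuity of $x\mapsto F(x,y)$ from $\mathcal{H}$3 gives $F(x^{*},y)\ge\limsup_{t\to0^{+}}F(y_{t},y)\ge 0$. Since $y\in\Omega$ was arbitrary, $x^{*}$ solves EP, which is exactly the desired inclusion. I expect the only delicate point to be this limiting step: one must be careful to reach $t=0$ solely through the semicontinuity inequality, which controls $F$ from above at $x^{*}$, rather than by a naive continuity claim, since $\mathcal{H}$3 supplies just one-sided control of $F$ in its first variable. Everything else is the verbatim Riemannian transcription of the classical Euclidean argument of Iusem and Sosa, with the geodesic $\gamma$ replacing the line segment and $\mathcal{H}$1--$\mathcal{H}$3 furnishing exactly the three ingredients (normalization, convexity in the second variable, and upper semicontinuity in the first) that the proof consumes.
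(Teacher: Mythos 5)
Your proof is correct and is exactly the argument the paper intends: the paper omits the proof of this lemma, stating that it extends with minor adjustments from the linear-case proofs of Iusem--Sosa and Iusem--Kassay--Sosa, and your transcription (geodesic $\gamma(t)=\exp_{x^*}(t\exp^{-1}_{x^*}y)$ in place of the line segment, then $\mathcal{H}1$, $\mathcal{H}2$ at the parameter $t$, and the one-sided limit via $\mathcal{H}3$) is precisely that adaptation. Your closing caution about using only the upper semicontinuity inequality at $t\to 0^+$ is well placed and matches the role $\mathcal{H}3$ plays in the referenced linear proofs.
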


\begin{remark}
Note that, as it is in the Euclidean context, the equality between the sets in the previous lemma in general does not happens, see~\cite{IS2003}. However, in the particular case where $F$ is pseudomonotone, the equality is immediately verified.
\end{remark}

Take $z_0\in M$ fixed.  For each $k\in \mathbb{N}$ consider the following set: 
\[
\Omega_k:=\{x\in \Omega:\; d(x,z_0)\leq k\}. 
\]
Note that $\Omega_k$ is a nonempty set, for $k\in \mathbb{N}$ sufficiently large. For simplicity, we can suppose, without loss of generality, that $\Omega_k$ is a nonempty set for all $k\in\mathbb{N}$. Moreover, as $\Omega_{k}$ is contained in the closed ball $B(z_0,k):=\{x\in M:\; d(z_0,x)\leq k\}$, it is a bounded set.
On the other hand, since $d(\cdot, z_{0})$ is a continuous and convex function (this follows from Remark \ref{rem2}), $\Omega_k$ is a convex and closed set and, hence, compact (see the Ropf-Rinow's Theorem). We denote, by $\Omega_k^0$, the following set: 
\[
\Omega_k^0:=\{ x\in \Omega : d(x,z_0)< k\}.
\] 
For each $y\in \Omega$, let us define:
\[
L_F (k, y) := \{x\in \Omega _k : F(y, x)\leq 0\}.
\]

\begin{lemma}\label{prop2}
Let $k\in \mathbb{N}$, $\bar{x}\in \bigcap_{y\in \Omega_k} L_F (k, y)$ and assume that there exists $\bar{y} \in \Omega_k^0$ such that  $F (\bar{x}, \bar{y}) \leq 0$. Then, $F(\bar{x}, y)\geq 0$, for all $y\in \Omega$, i.e., $\bar{x}$ is a solution for \eqref{ep}. 
\end{lemma}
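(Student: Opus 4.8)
The plan is to exploit the geodesic convexity of $F$ in its second argument together with the upper semicontinuity in its first argument, using the strictly interior point $\bar{y}$ to carry a conclusion obtained on the bounded set $\Omega_k$ out to all of $\Omega$. First I would unwind the hypothesis $\bar{x}\in\bigcap_{y\in\Omega_k}L_F(k,y)$: it says precisely that $\bar{x}\in\Omega_k$ and $F(y,\bar{x})\le 0$ for every $y\in\Omega_k$.

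\emph{Step 1 (solve EP on the compact set $\Omega_k$).} I would first establish $F(\bar{x},w)\ge 0$ for every $w\in\Omega_k$, i.e. a localized version of Lemma~\ref{rem3}. Fix $w\in\Omega_k$ and let $\gamma:[0,1]\to M$ be the unique geodesic with $\gamma(0)=\bar{x}$, $\gamma(1)=w$; since $\Omega_k$ is convex, $z_t:=\gamma(t)\in\Omega_k$ for all $t$. By the unwound hypothesis, $F(z_t,\bar{x})\le 0$, and by $\mathcal{H}1$ together with the convexity of $F(z_t,\cdot)$ (from $\mathcal{H}2$) along $\gamma$,
\[
0=F(z_t,z_t)=F(z_t,\gamma(t))\le (1-t)F(z_t,\bar{x})+tF(z_t,w),
\]
so $tF(z_t,w)\ge -(1-t)F(z_t,\bar{x})\ge 0$ and hence $F(z_t,w)\ge 0$ for $t\in(0,1]$. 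Letting $t\to 0^+$ one has $z_t\to\bar{x}$, and the upper semicontinuity of $x\mapsto F(x,w)$ ($\mathcal{H}3$) gives $F(\bar{x},w)\ge\limsup_{t\to0^+}F(z_t,w)\ge 0$.

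\emph{Step 2 (pass from $\Omega_k$ to $\Omega$).} Now take an arbitrary $y\in\Omega$, possibly with $d(y,z_0)>k$. Let $\sigma:[0,1]\to M$ be the geodesic with $\sigma(0)=\bar{y}$, $\sigma(1)=y$; convexity of $\Omega$ gives $\sigma(s)\in\Omega$. Because $\bar{y}\in\Omega_k^0$, that is $d(\bar{y},z_0)<k$, and $s\mapsto d(\sigma(s),z_0)$ is continuous, there is $\delta>0$ with $\sigma(s)\in\Omega_k^0\subset\Omega_k$ for all $s\in[0,\delta]$. For such $s$, Step~1 yields $F(\bar{x},\sigma(s))\ge 0$, while the convexity of $F(\bar{x},\cdot)$ along $\sigma$ together with the hypothesis $F(\bar{x},\bar{y})\le 0$ gives
\[
0\le F(\bar{x},\sigma(s))\le (1-s)F(\bar{x},\bar{y})+sF(\bar{x},y)\le sF(\bar{x},y).
\]
Dividing by $s>0$ shows $F(\bar{x},y)\ge 0$; since $y\in\Omega$ was arbitrary, $\bar{x}$ solves \eqref{ep}.

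The main obstacle is exactly the mismatch between the two ranges: the feasibility hypothesis only controls $F(\cdot,\bar{x})$ on the bounded set $\Omega_k$, whereas the conclusion must reach every $y\in\Omega$, including points far outside the ball. The device that resolves it is the strictly interior point $\bar{y}\in\Omega_k^0$: any geodesic emanating from $\bar{y}$ toward a distant $y$ remains inside $\Omega_k$ for a positive initial time, which is all that Step~2 needs in order to invoke Step~1. I would take modest care that the geodesics and $\exp^{-1}$ are well defined and unique (guaranteed on a Hadamard manifold), that $\Omega_k$ is genuinely convex (a sublevel set of the convex function $d(\cdot,z_0)$ intersected with $\Omega$, as noted before the statement), and that Step~1 uses $\limsup$ rather than a plain limit since $F$ is only upper semicontinuous in its first slot.
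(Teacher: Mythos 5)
Your proof is correct and is exactly the argument the paper has in mind: the paper omits the proof of this lemma, stating that it extends from the linear case in \cite{IKS2009,IS2003} with minor adjustments, and your two steps (the Minty-to-Stampacchia convexity/upper-semicontinuity argument on the compact convex set $\Omega_k$, followed by the interior-point extension along a geodesic from $\bar{y}$ toward an arbitrary $y\in\Omega$) are precisely that standard linear argument with line segments replaced by geodesics. The details are sound, including the use of $\limsup$ under $\mathcal{H}3$ and the convexity of $\Omega_k$ needed to keep $z_t$ in $\Omega_k$.
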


\begin{assumption}\label{assumption-13-1}
Given $k\in \mathbb{N}$, for all finite set $\{y_1,\ldots, y_m \}\subset \Omega_{k}$, one has 
\[
\emph{conv} ( \{ y_1,\ldots, y_m\})\subset\bigcup_{i=1}^mL_{F}(k,y_i).
\]
\end{assumption}

\begin{remark}\label{lem1} 
Note that, in the particular case where F is pseudomonotone, the property described by the previous assumption is naturally verified. Indeed, let $y_1,\ldots, y_m\in \Omega_k$,  take $\bar{y}\in \emph{conv}(\{y_{1},\ldots, y_{n}\})$ and let us suppose, for contradiction, that $\bar{y}\notin \bigcup_{i=1}^{m}L_{F}(k,y_i)$. Then,
  \begin{equation}\label{eq:equil1}
  F(y_{i},\bar{y})>0, \qquad i\in \{1,\ldots, m\}.
  \end{equation}
 Now, define the following set $B:=\{x\in \Omega_{k}:\; F(\bar{y}, x)<0\}$. In the particular case where $F$ is \mbox{pseudomonotone}, using \eqref{eq:equil1} and taking into account that $B$ is convex (this follows from $\mathcal{H}2$), we conclude that $\bar{y}\in B$ (see item $ii)$ of Remark~\ref{rem31}). But this contradicts $\mathcal{H}1$ and the affirmation  is proved.
  \end{remark}   
  
\begin{assumption}\label{assumption-13-2}
Given $z_{0}\in M$ fixed, consider a sequence  $\{z^k\} \subset \Omega$ such that $\{d(z^k,z_0)\}$ converges to infinity as $k$ goes to infinity.
Then, there exists $x^\ast \in \Omega$ and $k_0 \in \mathbb{N}$ such that 
\begin{equation*}\label{ineq:2013-3}
F(z^k, x^\ast)\leq 0, \qquad k\geq k_0.
\end{equation*}
\end{assumption}

It is worth noting that this last assumption has been presented by Iusem et al.~\cite{IKS2009}, in a space with a linear structure. It is a sufficient condition for the existence of solutions of the equilibrium problem EP. 

Next result (see~\cite{IS2003} for similar results, in the linear setting) assure us that Assumption~\ref{assumption-13-2} is a weaker sufficient condition than the coercivity assumption used by Colao et al.~\cite{CLMM2012}, for the existence of solutions of EP.

\begin{proposition}\label{prop3} 
Let  $\mathcal{B}\subset M$ be a compact set and  $y_0 \in  \mathcal{B}\cap \Omega$ a point such that
$F(x, y_0)< 0$, for all $x \in  \Omega \setminus \mathcal{B}$. Then, $F$ satisfies \emph{Assumption~\ref{assumption-13-2}}.
\end{proposition}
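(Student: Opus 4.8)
The plan is to verify Assumption~\ref{assumption-13-2} directly by exhibiting the required point, and the natural candidate is $x^\ast = y_0$ itself, which is admissible precisely because $y_0 \in \mathcal{B}\cap\Omega \subset \Omega$. The underlying idea is that any sequence which leaves every bounded neighborhood of $z_0$ must eventually escape the compact set $\mathcal{B}$, and once it has escaped, the hypothesis $F(x,y_0)<0$ valid on $\Omega\setminus\mathcal{B}$ can be applied pointwise along the tail of the sequence.

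First I would use the compactness of $\mathcal{B}$ to fix a finite radius. Since $\mathcal{B}$ is compact in the complete metric space $(M,d)$ and $x \mapsto d(x,z_0)$ is continuous (see Remark~\ref{rem2}), this function attains a maximum on $\mathcal{B}$; I set $R := \max_{x\in\mathcal{B}} d(x,z_0) < \infty$, so that $\mathcal{B} \subset B(z_0,R)$. Next, given a sequence $\{z^k\}\subset\Omega$ with $d(z^k,z_0)\to\infty$, the divergence to infinity furnishes an index $k_0\in\mathbb{N}$ such that $d(z^k,z_0) > R$ for every $k\geq k_0$. For such $k$ one cannot have $z^k\in\mathcal{B}$, since membership in $\mathcal{B}$ would force $d(z^k,z_0)\leq R$; hence $z^k \in \Omega\setminus\mathcal{B}$ for all $k \geq k_0$.

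Finally I would invoke the hypothesis. For each $k\geq k_0$ the point $x=z^k$ lies in $\Omega\setminus\mathcal{B}$, so that $F(z^k,y_0) < 0 \leq 0$. Choosing $x^\ast = y_0$ then produces exactly the data $x^\ast\in\Omega$ and $k_0\in\mathbb{N}$ demanded by Assumption~\ref{assumption-13-2}, which completes the argument. There is no genuine obstacle here; the only steps that merit care are the passage from compactness to the finite bound $R$ (this is what converts ``$d(z^k,z_0)\to\infty$'' into an eventual escape from $\mathcal{B}$) and the verification that $x^\ast=y_0$ is a legitimate choice, which is immediate from $y_0\in\mathcal{B}\cap\Omega$.
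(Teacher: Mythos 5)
Your proof is correct, and it is exactly the standard argument the paper has in mind: the paper omits a written proof of Proposition~\ref{prop3}, deferring to the linear-setting references \cite{IKS2009,IS2003}, where the proof is precisely your compactness-escape argument with $x^\ast=y_0$ (compactness of $\mathcal{B}$ bounds $d(\cdot,z_0)$ on $\mathcal{B}$, so the divergent sequence eventually lies in $\Omega\setminus\mathcal{B}$, where the strict inequality $F(z^k,y_0)<0$ applies). No gaps; your care in noting $y_0\in\mathcal{B}\cap\Omega$ guarantees both that $\mathcal{B}\neq\varnothing$ and that $x^\ast=y_0$ is admissible.
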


The following is the main result of this section.
\begin{theorem}\label{teo1}
Under \emph{Assumptions~\ref{assumption-13-1}} and \emph{\ref{assumption-13-2}},  EP admits a solution. 
\end{theorem}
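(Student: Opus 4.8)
The plan is to combine the finite-dimensional approximation scheme (the sets $\Omega_k$) with Proposition~\ref{prop1} (the KKM-type intersection theorem) to first solve a compact version of CFP, and then use Assumption~\ref{assumption-13-2} together with Lemma~\ref{prop2} to pass from the compact solution to a genuine solution of EP on all of $\Omega$.

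First I would fix $z_0 \in M$ and the associated sets $\Omega_k$, which by the discussion preceding Lemma~\ref{prop2} are nonempty, convex, closed, and compact. For each fixed $k$, I would apply Proposition~\ref{prop1} with $\mathcal{B} := \Omega_k$ and the set-valued map $H(y) := L_F(k,y) = \{x \in \Omega_k : F(y,x) \leq 0\}$. The hypotheses of Proposition~\ref{prop1} must be checked: each $H(y)$ is closed because $x \mapsto F(y,x)$ is lower semicontinuous by $\mathcal{H}2$, and since $H(y) \subset \Omega_k$ which is compact, condition (i) holds automatically for any $y_0$. Condition (ii) is precisely \emph{Assumption~\ref{assumption-13-1}}. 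Hence Proposition~\ref{prop1} yields a point
\begin{equation*}
\bar{x}_k \in \bigcap_{y \in \Omega_k} L_F(k,y) \neq \varnothing,
\end{equation*}
so that $F(y, \bar{x}_k) \leq 0$ for every $y \in \Omega_k$.

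Next I would produce the sequence $\{\bar{x}_k\}_{k \in \mathbb{N}}$ and argue by dichotomy on whether $\{d(\bar{x}_k, z_0)\}$ is bounded. \emph{Case 1:} if $\{d(\bar{x}_k,z_0)\}$ is unbounded, then passing to a subsequence I may assume $d(\bar{x}_k, z_0) \to \infty$; applying \emph{Assumption~\ref{assumption-13-2}} to this sequence gives $x^\ast \in \Omega$ and $k_0$ with $F(\bar{x}_k, x^\ast) \leq 0$ for all $k \geq k_0$. Choosing $k$ large enough that $x^\ast \in \Omega_k^0$ (possible since $d(x^\ast, z_0)$ is finite) and that $\bar{x}_k$ satisfies the intersection property above, I set $\bar{y} := x^\ast$ and invoke \emph{Lemma~\ref{prop2}}: the hypotheses are met because $\bar{x}_k \in \bigcap_{y \in \Omega_k} L_F(k,y)$ and $F(\bar{x}_k, \bar{y}) \leq 0$ with $\bar{y} \in \Omega_k^0$, whence $\bar{x}_k$ solves EP. \emph{Case 2:} if $\{d(\bar{x}_k,z_0)\}$ is bounded, then for $k$ large every $\bar{x}_k$ lies in $\Omega_k^0$, so any interior point can play the role of $\bar{y}$ and the same application of Lemma~\ref{prop2} finishes the argument.

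The step I expect to be the main obstacle is the careful bookkeeping in the unbounded case: I must simultaneously guarantee that the index $k$ is large enough for three things at once — that $\bar{x}_k$ has the full intersection property on $\Omega_k$, that the prospective witness point $\bar{y}$ lies in the \emph{open} sublevel region $\Omega_k^0$ (strictness matters, since Lemma~\ref{prop2} is stated with $\bar{y} \in \Omega_k^0$), and that the Assumption~\ref{assumption-13-2} inequality has already kicked in ($k \geq k_0$). Reconciling these conditions, and verifying that the point $x^\ast$ supplied by Assumption~\ref{assumption-13-2} can indeed serve as the interior witness $\bar{y}$ required by Lemma~\ref{prop2}, is where the nonlinear geometry and the finite-approximation interplay most delicately; everything else reduces to verifying the hypotheses of the two cited results.
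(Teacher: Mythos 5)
Your proposal is correct and follows essentially the same route as the paper: the KKM-type Proposition~\ref{prop1} applied on the compact truncations $\Omega_k$ with $H(y)=L_F(k,y)$, then a dichotomy resolved via Lemma~\ref{prop2} and Assumption~\ref{assumption-13-2} (the paper phrases the dichotomy as $d(z^k,z_0)<k$ for some $k$ versus $d(z^k,z_0)=k$ for all $k$, which amounts to the same bounded/unbounded split and even spares the passage to a subsequence). One small slip in your Case 2: not \emph{any} interior point can serve as $\bar{y}$, since Lemma~\ref{prop2} also requires $F(\bar{x}_k,\bar{y})\le 0$ --- the right witness is $\bar{y}=\bar{x}_k$ itself, which lies in $\Omega_k^0$ for large $k$ and satisfies $F(\bar{x}_k,\bar{x}_k)=0$ by $\mathcal{H}1$.
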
 
\begin{proof}
Recall that $\Omega_{k}$ is a convex and compact set for each $k\in \mathbb{N}$. Now, given $k\in \mathbb{N}$ and $y\in \Omega$, note that $L_F (k, y)$ is a compact set. Indeed, this fact follows from the definition of $L_{F}(k,y)$ combined with \mbox{assumption} $\mathcal{H}2$ ($F(y,\cdot)$ is a lower semicontinuous function on $\Omega$) and compactness of $\Omega_{k}$. Now, since \mbox{Assumption} \ref{assumption-13-1} holds true, using Proposition~\ref{prop1} with $\mathcal{B}=\Omega_{k}$ and $H(y)=L_{F}(k,y)$, we conclude that, for each $k\in \mathbb{N}$,
\[
\bigcap_{y\in \Omega_k}  L_F (k, y)\neq \emptyset.
\]
For each $k$, choose $z^k \in \bigcap_{y\in \Omega_k} L_F (k, y)$ and take $z_{0}\in M$ fixed. If  there exists $k\in \mathbb{N}$ such that $d(z^k,z_0) < k$, then $z^k \in\Omega_k^0$ and, from Lemma~\ref{prop2}, it follows that $z^{k}$ solves EP. On the other hand, if $d(z^k,z_0)= k$, from Assumption~\ref{assumption-13-2}, there exists,  $x^\ast \in \Omega$ and $k_{0}\in\mathbb{N}$ such that  $F (z^k, x^\ast)\leq 0$, for all $k\geq k_0$. Taking $ k'> k_0$ such that $d(x^\ast,z_0)<k'$, we have $F (z^{k'},x^\ast)\leq 0$ and $x^\ast \in \Omega _{k'}^0$. Therefore, using again Lemma~\ref{prop2},  we conclude that $z^{k'}$  solves EP, and the proof is complete. 
 \end{proof}

Next example was inspired by~\cite[Example 3.4]{CLMM2012}. It illustrates the usefulness of the our previous result, in the sense that it applies to some situations not covered in the linear setting. For other papers that highlight such advantage, in regard to the linear setting, see \cite{CFPN2006, Bento2012}.  
\begin{example}
Let $\Omega=\{(x,y,z): 0\leq x \leq 1, y^2-z^2=-1, y\geq 0, z\geq 1\}\subset \mathbb{R}\times \mathbb{H}^1$
and consider the \mbox{following} bifunction $F:\Omega\times \Omega \rightarrow \mathbb{R}$, given by:
\[
F((x_1, y_1, z_1), (x_2, y_2, z_2)):=(2- x_1)\left(\left(y_2^2+z_2^2\right)-\left(y_1^2+z_1^2\right)\right).
\]
Note that $\Omega$ is indeed a not convex set in $\mathbb{R}^3$. So,  an equilibrium problem defined on $\Omega$  cannot be solved by using the classical results known in the linear context.  Let $(\mathbb{H}^{n}, \langle \, , \, \rangle)$ be the
Riemannian manifold, where
\[
\mathbb{H}^{n}:=\{ x=(x_1, x_2, \ldots, x_{n+1})\in \mathbb{R}^{n+1}: x_{n+1} >0 \; \mbox{and} \; \langle x,x\rangle=-1 \} \quad \mbox{(hyperbolic n space)},
\] and 
$\langle \, , \,
\rangle
$
 is the Riemannian metric  $\langle x,y \rangle:= x_1y_1+x_2y_2+\ldots +x_{n}y_{n} -x_{n+1}y_{n+1}$ (Lorentz metric). As noted in \emph{\cite{CLMM2012}}, $(\mathbb{H}^{n}, \langle \, , \, \rangle)$ is a Hadamard manifold  with sectional curvature $-1$ and, given initial conditions $x\in \mathbb{H}^n$, $v \in T_x \mathbb{H}^n$ ($\|v\|=1$), the  normalized geodesic $\gamma: \mathbb{R}\to\mathbb{H}^n$, is given by:
\[
\gamma (t) = (\cosh t)x + (\sinh t)v, \qquad t \in \mathbb{R}.
\]
Hence, we obtain the following expression for the Riemannian distance $d$:
\[
d(x,y)=\emph{arccosh}(-\langle x,y\rangle),\qquad  x,y\in \mathbb{H}^n.
\]
Again, as observed in \emph{\cite{CLMM2012}},  $\Omega$ is a convex set which is immersed in the Hadamard manifold $M:=\mathbb{R}\times \mathbb{H}^1$. Using the expression of the geodesic curves, it can be deduced that $F$ is a convex function in the second variable. Moreover, from the definition of $F$, it is easy to see  that all the assumptions $\mathcal{H}1$, $\mathcal{H}2$ and $\mathcal{H}3$ are satisfied, and $F$ is a pseudomonotone bifunction which is not monotone. In particular, from \emph{Remark~\ref{lem1}}, it follows that \emph{Assumption \ref{assumption-13-1}} holds. Now, take $w^0\in M$ fixed and a sequence $\{w^{k}\}\subset \Omega$, $w^k:=(x_k,y_k, z_k)$, such that $d(w^k, w^0)\to+\infty$. There exists  $x^\ast:=(1, 0, 1)\in \Omega$ such that $F(w^k,x^\ast)\leq 0$, for all $k\in \mathbb{N}$, i.e., \emph{Assumption \ref{assumption-13-2}} holds. Therefore, \emph{Theorem \ref{teo1}} implies the existence of an equilibrium point for $F$.

\end{example}


\section{Proximal Point for Equilibrium Problem}\label{sec4}
In this section and remainder of this paper, $M$ will denote an Hadamard manifold with null sectional curvature. Following some ideas presented in~\cite{IS2010},  we propose an approach of  the proximal point algorithm for equilibrium problems on Hadamard manifolds with null sectional curvature, where the convergence result is obtained for bifunctions which are not necessarily monotone. This problem was proposed in~\cite{CLMM2012} for the case of an Hadamard manifold and under monotonocity of the equilibrium bifunction. 

Let us denote the equilibrium point set of $F$ by EP(F,$\Omega$) and, for $\lambda>0$ and $z\in \Omega$ fixed, consider the bifunction 
\begin{equation}\label{eq:regularization1}
F_{\lambda, z}(x,y):=F(x,y)-\lambda\langle \exp^{-1}_{x}z,\exp^{-1}_xy\rangle, \qquad x,y\in\Omega.
\end{equation}
Now, we describe a proximal point algorithm  to solve the equilibrium problem \eqref{ep}.
\begin{algorithm}\label{alg1} Take $\{\lambda_{k}\}$ a bounded sequence of positive real numbers.
\begin{itemize}
\item [1.] Choose an initial point $x^{0}\in \Omega$;
\item [2.] Given $x^{k}$,  if $x^{k}\in \emph{EP}(F,\Omega)$, STOP. Otherwise;
\item [3.] Given $x^{k}$, take as the next iterate any  $x^{k+1}\in\Omega$ such that:
\begin{equation}\label{mpp}
x^{k+1}\in \emph{EP}(F_{k},\Omega), \quad F_{k}:=F_{\lambda_{k}, x^{k}}.
\end{equation}
\end{itemize}
\end{algorithm}
\begin{remark}
It is worth noting that the iterative process \eqref{mpp} has appeared first in~\emph{\cite{CLMM2012}}. If $V\in\mathcal{X}(M)$, note that, for $F$ given as in \eqref{eq:VIP}, this iterative process retrieves the proximal point method for the \mbox{variational} \mbox{inequalities} problem on Hadamard manifolds presented in~\emph{\cite{Tang2013}}. In particular, the \mbox{iterative} process \eqref{mpp} retrieves the proximal point method for minimization problems, see~\emph{\cite{FO2002}} or, more generally, the proximal point for vector fields both on Hadamard manifolds, see~\emph{\cite{LLMM2009}}.
\end{remark}
Next results are useful to ensure the well-definition of Algorithm \ref{alg1}. In the remainder of this section, we assume that $\lambda$ is a positive real number and 
$z \in \Omega$, both fixed.
\begin{lemma}\label{lem2} 
Let $F$ be a $\theta$-undermonotone bifunction with $\theta\leq \lambda$. Then, $F_{\lambda, z}$ is monotone.
\end{lemma}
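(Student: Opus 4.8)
The plan is to show directly that $F_{\lambda,z}$ satisfies the monotonicity inequality $F_{\lambda,z}(x,y)+F_{\lambda,z}(y,x)\leq 0$ for all $(x,y)\in\Omega\times\Omega$. From the definition \eqref{eq:regularization1}, summing the two terms gives
\[
F_{\lambda,z}(x,y)+F_{\lambda,z}(y,x)=F(x,y)+F(y,x)-\lambda\left(\langle \exp^{-1}_{x}z,\exp^{-1}_xy\rangle+\langle \exp^{-1}_{y}z,\exp^{-1}_yx\rangle\right).
\]
The first two terms are controlled by the $\theta$-undermonotonicity hypothesis: $F(x,y)+F(y,x)\leq \theta\, d^2(x,y)$. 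So it remains to bound the regularization contribution from below, and the whole argument reduces to an estimate on the bracketed inner-product sum.

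The key geometric step is to recognize the bracketed quantity as exactly the expression appearing in inequality \eqref{ineq1} with the roles of the points chosen so that $z$ plays the part of the third vertex. Indeed, setting the triple to be $x$, $y$ and $z$, inequality \eqref{ineq1} reads $d^2(x,y)\leq \langle \exp^{-1}_xz,\exp^{-1}_xy\rangle+\langle \exp^{-1}_yz,\exp^{-1}_yx\rangle$. Therefore
\[
-\lambda\left(\langle \exp^{-1}_{x}z,\exp^{-1}_xy\rangle+\langle \exp^{-1}_{y}z,\exp^{-1}_yx\rangle\right)\leq -\lambda\, d^2(x,y).
\]
Combining this with the undermonotonicity bound yields
\[
F_{\lambda,z}(x,y)+F_{\lambda,z}(y,x)\leq \theta\, d^2(x,y)-\lambda\, d^2(x,y)=(\theta-\lambda)\,d^2(x,y)\leq 0,
\]
where the final inequality uses the hypothesis $\theta\leq\lambda$ together with $d^2(x,y)\geq 0$. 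This is precisely the monotonicity of $F_{\lambda,z}$.

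I expect the main (and essentially only) obstacle to be identifying the correct instance of \eqref{ineq1}: one must match the arguments of the exponential maps in \eqref{eq:regularization1} to the symmetric form of \eqref{ineq1}, taking care that the regularization term carries the factor $\langle\exp^{-1}_xz,\exp^{-1}_xy\rangle$ based at $x$ (and its symmetric counterpart based at $y$), which is exactly what \eqref{ineq1} sums. Once this pairing is seen, the proof is a one-line combination of two inequalities; no curvature assumption beyond the Hadamard structure already used to derive \eqref{ineq1} is needed, and the null-curvature hypothesis of the section is not invoked here.
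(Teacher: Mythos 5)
Your proof is correct and follows exactly the paper's own argument: sum the two regularized terms via \eqref{eq:regularization1}, bound $F(x,y)+F(y,x)$ by $\theta\,d^2(x,y)$ using $\theta$-undermonotonicity, and control the bracketed inner-product sum from below by $d^2(x,y)$ via \eqref{ineq1}, concluding with $\theta\leq\lambda$. You have merely written out in full the steps the paper leaves implicit, including the correct observation that only the Hadamard structure behind \eqref{ineq1}, and not the null-curvature hypothesis, is needed here.
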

\begin{proof} 
From \eqref{eq:regularization1}, it is easy to see that 
\[
F_{\lambda, z}(x,y)+F_{\lambda, z}(y,x)=F(x,y)+F(y,x)-\lambda[\langle \exp^{-1}_xz,\exp^{-1}_xy\rangle + \langle \exp^{-1}_yz,\exp^{-1}_yx\rangle ], \qquad x, y\in\Omega.
\]
So, taking into account that $F$ is $\theta$-undermonotone, the desired result follows by combining last equality with \eqref{ineq1} and assumption $\theta\leq \lambda$.
\end{proof}

\begin{lemma}\label{lem:06-11}
$F_{\lambda, z}$ satisfies the assumption $\mathcal{H}$2.
\end{lemma}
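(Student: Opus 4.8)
The plan is to fix $x\in\Omega$ and split $F_{\lambda,z}(x,\cdot)$ into the part coming from $F$ and the part coming from the regularizing term, verifying convexity and lower semicontinuity for each. Using \eqref{eq:regularization1}, write
\[
F_{\lambda,z}(x,y)=F(x,y)+g_x(y),\qquad g_x(y):=-\lambda\langle \exp^{-1}_{x}z,\exp^{-1}_xy\rangle .
\]
Hypothesis $\mathcal{H}2$ applied to $F$ gives that $y\mapsto F(x,y)$ is convex and lower semicontinuous. Since the sum of a convex (resp. lower semicontinuous) function with a convex and continuous one is again convex (resp. lower semicontinuous), it suffices to prove that $g_x$ is convex and continuous on $\Omega$.

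The heart of the argument is to show that $y\mapsto \langle \exp^{-1}_{x}z,\exp^{-1}_xy\rangle$ is affine along geodesics, so that $g_x$, being a real multiple of an affine function, is itself affine and hence both convex and continuous. This is precisely where the standing hypothesis of null sectional curvature enters: a complete simply connected flat Riemannian manifold is globally isometric to $\mathbb{R}^n$, equivalently $\exp_x:T_xM\to M$ is a global isometry for every $x$. Consequently $\exp^{-1}_x:M\to T_xM$ is an isometry carrying each geodesic segment $\sigma:[a,b]\to\Omega$ to a straight segment $t\mapsto \exp^{-1}_x\sigma(t)=u+t\,w$ in the Euclidean space $T_xM$. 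Fixing $v:=\exp^{-1}_xz$, the composition $t\mapsto \langle v,\exp^{-1}_x\sigma(t)\rangle=\langle v,u\rangle+t\langle v,w\rangle$ is affine in $t$, which is exactly the assertion that $y\mapsto\langle \exp^{-1}_{x}z,\exp^{-1}_xy\rangle$ is affine along $\sigma$; continuity follows at once from continuity of $\exp^{-1}_x$ and of the inner product.

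I would then conclude by recombining: $g_x$ affine implies $g_x$ convex and continuous, so $F_{\lambda,z}(x,\cdot)=F(x,\cdot)+g_x$ is convex and lower semicontinuous, which is $\mathcal{H}2$ for $F_{\lambda,z}$. The only delicate point is the affineness of $y\mapsto\langle \exp^{-1}_{x}z,\exp^{-1}_xy\rangle$, and it is genuinely a curvature phenomenon: on a Hadamard manifold with strictly negative curvature this map fails to be affine (and need not even be convex), so the flatness assumption is what linearizes the logarithm map. As a consistency check one may note that for null curvature the comparison inequality \eqref{E:2lawcos} holds as an equality, yielding $\langle \exp^{-1}_{x}z,\exp^{-1}_xy\rangle=\tfrac12\big(d^2(x,z)+d^2(x,y)-d^2(z,y)\big)$; this identity by itself only exhibits $g_x$ as a difference of convex functions and therefore does not replace the affineness argument, but it is consistent with it.
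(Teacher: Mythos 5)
Your proof is correct, and it rests on the same decomposition as the paper's: fixing $x$, one writes $F_{\lambda,z}(x,\cdot)=F(x,\cdot)+g_x$ with $g_x(y)=-\lambda\langle \exp^{-1}_{x}z,\exp^{-1}_{x}y\rangle$, so that by $\mathcal{H}2$ for $F$ and $\lambda>0$ everything reduces to convexity and lower semicontinuity of $y\mapsto -\langle \exp^{-1}_{x}z,\exp^{-1}_{x}y\rangle$. Where you genuinely diverge is in how that key step is justified: the paper outsources the convexity to \cite[Theorem 1]{CMSS2014} and gets lower semicontinuity by observing that the function is differentiable, whereas you give a self-contained geometric argument from the standing null-curvature hypothesis of Section 4 --- since a flat Hadamard manifold has $\exp_x:T_xM\to M$ a global isometry, $\exp^{-1}_x$ sends geodesic segments of $\Omega$ to affinely parametrized line segments in $T_xM$, so $g_x$ is affine along geodesics, hence convex \emph{and} continuous in one stroke. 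Your route buys three things the citation does not: it proves affinity rather than mere convexity, it makes explicit exactly where flatness enters (your observation that convexity of $g_x$ can fail under strictly negative curvature is consistent with the section's restriction to null curvature, which is otherwise unexplained at the level of this lemma), and it keeps the proof independent of an external preprint. Your closing consistency check is also accurate and appropriately cautious: equality in \eqref{E:2lawcos} under zero curvature yields $\langle \exp^{-1}_{x}z,\exp^{-1}_{x}y\rangle=\tfrac{1}{2}\left(d^2(x,z)+d^2(x,y)-d^2(z,y)\right)$, which exhibits $g_x$ only as a difference of convex functions and so, as you note, cannot by itself replace the affinity argument.
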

\begin{proof}
From the definition of $F_{\lambda, z}$ in \eqref{eq:regularization1} and, taking into account that $\lambda>0$ and $F$ satisfies $\mathcal{H}$1, to prove this lemma it is sufficient to ensure that,  $\Omega\ni y\longmapsto g(y):=-\langle \exp^{-1}_{x}z,\exp^{-1}_{x}y  \rangle\in\mathbb{R}$ is convex and lower semicontinuous. For the convexity of $g$, see Cruz Neto et al.~\cite[Theorem 1]{CMSS2014}.  Note that $g$ is a lower semicontinuous  function, since $g$ is a differentiable function. 
\end{proof}
\begin{lemma}\label{lem3} 
Let $F$ be a $\theta$-undermonotone bifunction with $\theta<\lambda$. If $F$ satisfies \emph{Assumption \ref{assumption-13-2}}, then $F_{\lambda, z}$ also satisfies this assumption. 
\end{lemma}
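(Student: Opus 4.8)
The plan is to verify Assumption~\ref{assumption-13-2} for $F_{\lambda, z}$ by recycling the point produced by the same assumption for $F$ and then showing that the extra term in \eqref{eq:regularization1} is harmless in the limit. So I would fix $z_0 \in M$ and let $\{z^k\} \subset \Omega$ be an arbitrary sequence with $d(z^k, z_0) \to \infty$. Since $F$ satisfies Assumption~\ref{assumption-13-2}, there are $x^\ast \in \Omega$ and $k_0 \in \mathbb{N}$ with $F(z^k, x^\ast) \leq 0$ for all $k \geq k_0$. The claim is that this very $x^\ast$ also works for $F_{\lambda, z}$. Recalling
\[
F_{\lambda, z}(z^k, x^\ast) = F(z^k, x^\ast) - \lambda \langle \exp^{-1}_{z^k} z, \exp^{-1}_{z^k} x^\ast \rangle,
\]
it suffices to prove that $\langle \exp^{-1}_{z^k} z, \exp^{-1}_{z^k} x^\ast \rangle$ is eventually nonnegative: then, using $\lambda > 0$ together with $F(z^k, x^\ast) \leq 0$, the whole right-hand side is $\leq 0$ for all large $k$.

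The key step is a lower bound on that inner product coming from the law of cosines. Applying \eqref{E:2lawcos} to the geodesic triangle with vertices $z$, $z^k$ and $x^\ast$, with the inner product taken at the vertex $z^k$, yields
\[
2\langle \exp^{-1}_{z^k} z, \exp^{-1}_{z^k} x^\ast \rangle \geq d^2(z, z^k) + d^2(z^k, x^\ast) - d^2(z, x^\ast).
\]
Here $z$, $x^\ast$ and $z_0$ are fixed, so $d(z, x^\ast)$ is a constant, while the triangle inequality gives $d(z^k, z) \geq d(z^k, z_0) - d(z_0, z) \to \infty$ and likewise $d(z^k, x^\ast) \to \infty$. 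Hence the right-hand side tends to $+\infty$, so there is $k_1 \geq k_0$ with $\langle \exp^{-1}_{z^k} z, \exp^{-1}_{z^k} x^\ast \rangle \geq 0$ for all $k \geq k_1$.

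Combining the two displays, for every $k \geq k_1$ one obtains $F_{\lambda, z}(z^k, x^\ast) \leq F(z^k, x^\ast) \leq 0$, which is precisely Assumption~\ref{assumption-13-2} for $F_{\lambda, z}$, with the same limiting point $x^\ast$ and threshold $k_1$. I expect the only delicate point to be the geometric estimate: one must invoke the law of cosines with the correct vertex, so that the surviving inner product is exactly the one appearing in $F_{\lambda, z}$, and then check that both distances $d(z^k, z)$ and $d(z^k, x^\ast)$—not merely $d(z^k, z_0)$—blow up. It is worth observing that the $\theta$-undermonotonicity and the hypothesis $\theta < \lambda$ are not actually used in this argument; they are recorded here only to keep the standing hypotheses consistent with the companion Lemma~\ref{lem2}, which supplies the monotonicity of $F_{\lambda, z}$ needed elsewhere.
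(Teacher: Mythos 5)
Your proof is correct, but it travels a genuinely different route from the paper's. You recycle the point $x^\ast$ furnished by Assumption~\ref{assumption-13-2} for $F$ and kill the Tikhonov term geometrically: applying \eqref{E:2lawcos} at the vertex $z^k$ of the triangle $\Delta(z\,z^k x^\ast)$ gives
\begin{equation*}
2\bigl\langle \exp^{-1}_{z^k} z,\ \exp^{-1}_{z^k} x^\ast \bigr\rangle \geq d^2(z,z^k)+d^2(z^k,x^\ast)-d^2(z,x^\ast)\longrightarrow +\infty,
\end{equation*}
so the subtracted term $-\lambda\langle \exp^{-1}_{z^k}z,\exp^{-1}_{z^k}x^\ast\rangle$ is eventually nonpositive and $F_{\lambda,z}(z^k,x^\ast)\leq F(z^k,x^\ast)\leq 0$; this checks out, including the choice of vertex, and the blow-up of both $d(z^k,z)$ and $d(z^k,x^\ast)$ follows from the triangle inequality as you say. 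The paper instead tests Assumption~\ref{assumption-13-2} for $F_{\lambda,z}$ with the point $z$ itself: it writes $F_{\lambda,z}(z^k,z)=F(z^k,z)-\lambda d^2(z^k,z)\leq -F(z,z^k)+(\theta-\lambda)d^2(z^k,z)$ via $\theta$-undermonotonicity, then bounds $-F(z,z^k)$ linearly in $d(z^k,z)$ by a subgradient of the convex function $F(z,\cdot)$ (using $\mathcal{H}2$), so that the quadratic term $(\theta-\lambda)d^2(z^k,z)$ with $\theta<\lambda$ dominates. Your closing observation is sharper than you may realize and is the real point of comparison: your argument never uses $\theta$-undermonotonicity or $\theta<\lambda$, only $\lambda>0$, so it shows the assumption transfers from $F$ to $F_{\lambda,z}$ under weaker structural hypotheses; conversely, the paper's argument never uses Assumption~\ref{assumption-13-2} for $F$ at all, so it in fact establishes the conclusion for any $\theta$-undermonotone $F$ satisfying $\mathcal{H}2$ with $\theta<\lambda$, whether or not $F$ itself satisfies the assumption. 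Each proof thus consumes a different, strictly smaller subset of the standing hypotheses, and together they show the lemma holds for two independent reasons. One cosmetic caveat: for small $k$ the triple $(z,z^k,x^\ast)$ could be degenerate (coincident points), but \eqref{E:2lawcos} holds trivially there, and your estimate is only needed for $k$ large anyway.
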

\begin{proof} 
First of all, given $z_{0}\in M$, consider a sequence $\{z^k\}\subset\Omega$ such that $\{d(z^k,z_0)\}$ converges to infinity as $k$ goes to infinity. 
Using \eqref{eq:regularization1} with $x=z^{k}$ and $y=z$, we get 
\begin{eqnarray}\label{eq1}
F_{\lambda,z}(z^k, z)&=& F(z^k,z)-\lambda \langle \exp^{-1}_{z^k}z,\exp^{-1}_{z^k}z\rangle,\nonumber\\
&=& F(z^k,z)-\lambda d^2(z^k,z),\nonumber\\
&\leq& -F(z,z^k)+(\theta-\lambda)d^2(z^k,z),
\end{eqnarray}
where the last inequality follows from the $\theta$-undermonotonicity of $F$. Let us show that $F_{\lambda,z}(z^k,z)\leq 0$, for all $k\geq k_0$. Define the function $f_z:\Omega\rightarrow\mathbb{R}$ by $f_z(y)=F(z,y)$.
Since $M$ is an Hadamard manifold and $y\mapsto F(z,y)$ is a convex function, there exists $v'\in \partial f_z(x')$. So, applying inequality \eqref{sgrad} with $f=f_{z}$, $s=v'$, $p=x'$ and $q=z^{k}$, we have  
\begin{equation}\label{eq2}
\langle v' ,\exp^{-1}_{x'}z^k\rangle \leq f_z(z^k)-f_z(x')= F(z,z^k)- F(z,x'),\qquad k=0,1,\dots.
\end{equation}
From \eqref{eq2} and Cauchy-Schwarz inequality, 
\begin{equation}\label{eq3}
-F(z,z^k)\leq \| v'\|d(z^k,x')- F(z,x')\leq \| v'\|[d(z^k,z) +d(z,x')]- F(z,x'),\qquad k=0,1,\ldots.
\end{equation}
Using \eqref{eq1} and \eqref{eq3}, we have
\begin{eqnarray}\label{ineq:welldef1}
F_{\lambda,z}(z^k,z) &\leq& \| v'\|[d(z^k,z) +d(z,x')]- F(z,x')+ (\theta-\lambda)d^2(z^k,z), \qquad k=0,1,\ldots\nonumber\\
&=&  d(z^k,z)[\| v'\|+(\theta-\lambda)d(z^k,z)]+ \| v'\|d(z,x')- F(z,x'), \qquad k=0,1,\ldots.
\end{eqnarray}
Now, taking into account that $\theta<\lambda$ and $\{d(z^k,z_0)\}$ converges to infinity as $k$ goes to infinity, letting $k$ goes to infinity,  we obtain  $(\theta-\lambda)d(z^k,z)\to -\infty$. Hence, the desired results follows from the inequality \eqref{ineq:welldef1} which concludes the proof. 
 \end{proof}

\begin{theorem}\label{teo2}
Assume that \emph{Assumption~\ref{assumption-13-2}} holds and  $F$ is a $\theta$-undermonotone bifunction with $\theta<\lambda$. Then, there exists an unique $\bar{x}^\ast \in\Omega$ such that
\[
F_{\lambda, z}(\bar{x}^\ast, y)\geq 0,\qquad y\in \Omega.
\]
\end{theorem}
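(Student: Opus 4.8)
The plan is to derive existence by applying Theorem~\ref{teo1} to the regularized bifunction $F_{\lambda,z}$, and to obtain uniqueness by a short direct argument that exploits the strict inequality $\theta<\lambda$. First I would verify that $F_{\lambda,z}$ satisfies the three standing hypotheses. Hypothesis $\mathcal{H}2$ for $F_{\lambda,z}$ is exactly the content of Lemma~\ref{lem:06-11}. For $\mathcal{H}1$, since $\exp^{-1}_x x=0$, the regularization term vanishes on the diagonal, so $F_{\lambda,z}(x,x)=F(x,x)=0$ by $\mathcal{H}1$ for $F$. For $\mathcal{H}3$, the map $x\mapsto -\lambda\langle\exp^{-1}_x z,\exp^{-1}_x y\rangle$ is continuous (indeed smooth) on the Hadamard manifold, and adding a continuous map to the upper semicontinuous map $x\mapsto F(x,y)$ preserves upper semicontinuity.

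Next I would check the two Assumptions required by Theorem~\ref{teo1}. Because $\theta<\lambda$ implies $\theta\le\lambda$, Lemma~\ref{lem2} gives that $F_{\lambda,z}$ is monotone, hence pseudomonotone by item~i) of Remark~\ref{rem31}; Remark~\ref{lem1} then guarantees that $F_{\lambda,z}$ satisfies Assumption~\ref{assumption-13-1}. That $F_{\lambda,z}$ satisfies Assumption~\ref{assumption-13-2} is precisely Lemma~\ref{lem3}, whose hypothesis $\theta<\lambda$ is in force here. With all hypotheses of Theorem~\ref{teo1} verified for $F_{\lambda,z}$, the existence of a solution $\bar{x}^\ast$ follows at once.

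For uniqueness, suppose $\bar{x}^\ast_1$ and $\bar{x}^\ast_2$ both solve the problem. Taking $y=\bar{x}^\ast_2$ in the inequality for $\bar{x}^\ast_1$ and $y=\bar{x}^\ast_1$ in the inequality for $\bar{x}^\ast_2$, and adding, I obtain $F_{\lambda,z}(\bar{x}^\ast_1,\bar{x}^\ast_2)+F_{\lambda,z}(\bar{x}^\ast_2,\bar{x}^\ast_1)\ge 0$. On the other hand, using the symmetrized identity from the proof of Lemma~\ref{lem2} together with the $\theta$-undermonotonicity of $F$ and inequality~\eqref{ineq1}, I would estimate the same sum from above by $(\theta-\lambda)d^2(\bar{x}^\ast_1,\bar{x}^\ast_2)$. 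Combining the two bounds gives $0\le(\theta-\lambda)d^2(\bar{x}^\ast_1,\bar{x}^\ast_2)$, and since $\theta-\lambda<0$ this forces $d(\bar{x}^\ast_1,\bar{x}^\ast_2)=0$, i.e. $\bar{x}^\ast_1=\bar{x}^\ast_2$.

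I do not anticipate any serious obstacle: the preparatory Lemmas~\ref{lem2}, \ref{lem:06-11} and~\ref{lem3} were tailored precisely so that $F_{\lambda,z}$ inherits every hypothesis of Theorem~\ref{teo1}, so the existence half reduces to bookkeeping. The only point demanding care is the uniqueness estimate, where the combination of undermonotonicity with~\eqref{ineq1} must yield the strict sign $(\theta-\lambda)<0$; this is exactly where the strengthening from $\theta\le\lambda$ (enough for mere monotonicity) to $\theta<\lambda$ is used, converting monotonicity into the strict-contraction behavior needed to separate the two candidate solutions.
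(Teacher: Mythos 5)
Your proposal is correct and follows essentially the same route as the paper: existence by feeding $F_{\lambda,z}$ into Theorem~\ref{teo1} via Lemma~\ref{lem2} (monotonicity, hence pseudomonotonicity and, by Remark~\ref{lem1}, Assumption~\ref{assumption-13-1}), Lemma~\ref{lem:06-11} and Lemma~\ref{lem3}, and uniqueness by summing the two solution inequalities and playing $\theta$-undermonotonicity against~\eqref{ineq1}. If anything, your version is marginally tidier than the paper's: you explicitly verify $\mathcal{H}1$ and $\mathcal{H}3$ for $F_{\lambda,z}$ (which the paper leaves tacit), and your direct estimate $0\le(\theta-\lambda)\,d^2(\bar{x}^\ast_1,\bar{x}^\ast_2)$ avoids the paper's contradiction step, whose strict inequality $\theta d^2<\lambda d^2$ silently presupposes $d(\bar{x}^\ast_1,\bar{x}^\ast_2)>0$.
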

\begin{proof}
From Lemma \ref{lem2} it follows that $F_{\lambda,z}$ is monotone and, in particular, pseudomotone (this follows from Remarks \ref{rem31}).  Moreover, Remark~\ref{lem1} implies that $F_{\lambda, z}$ satisfies {Assumption \ref{assumption-13-1}} and Lemma~\ref{lem:06-11} (resp. Lemma~\ref{lem3}) tell us that $F_{\lambda, z}$ satisfies $\mathcal{H}2$ (resp. Assumption \ref{assumption-13-2}). Hence, from Theorem~\ref{teo1} there exists, a point $\bar{x}_{1}^\ast \in \Omega$ such that
\[
F_{\lambda, z}(\bar{x}^\ast, y)\geq 0,\qquad y\in \Omega.
\]
Let us suppose, by contradiction,  that there exists $\bar{x}_{2}^\ast$ satisfying the last inequality. Then,  
\[
F_{\lambda,z}(\bar{x}_1^\ast,\bar{x}_2^\ast)=F(\bar{x}_1^\ast,\bar{x}_2^\ast)-\lambda \langle \exp^{-1}_{\bar{x}_1^\ast} z,\exp^{-1}_{\bar{x}_1^\ast}\bar{x}_2^\ast\rangle \geq 0,
\]
and
\[
F_{\lambda, z}(\bar{x}_2^\ast,\bar{x}_1^\ast)=F(\bar{x}_2^\ast,\bar{x}_1^\ast)-\lambda \langle \exp^{-1}_{\bar{x}_2^\ast} z,\exp^{-1}_{\bar{x}_2^\ast}\bar{x}_1^\ast\rangle \geq 0.
\] 
By summing the last two inequalities, we get
\[
\lambda [\langle \exp^{-1}_{\bar{x}_1^\ast} z,\exp^{-1}_{\bar{x}_1^\ast}\bar{x}_2^\ast\rangle  + \langle \exp^{-1}_{\bar{x}_2^\ast} z,\exp^{-1}_{\bar{x}_2^\ast}\bar{x}_1^\ast\rangle]\leq F(\bar{x}_1^\ast,\bar{x}_2^\ast)+F(\bar{x}_2^\ast,\bar{x}_1^\ast)  
\leq \theta d^2(\bar{x}_1^\ast,\bar{x}_2^\ast)< \lambda d^2(\bar{x}_1^\ast,\bar{x}_2^\ast),
\]
which contradicts inequality \eqref{ineq1} and the proof is concluded.
 \end{proof}

\begin{corollary}
Assume that \emph{Assumption~\ref{assumption-13-2}} holds and  $F$ is a $\theta$-undermonotone bifunction. If $\{\lambda_{k}\}$ is a  bounded sequence of positive real numbers such that $\theta<\lambda_{k}$, $k\in\mathbb{N}$, then Algorithm~\ref{alg1} is well-defined.
\end{corollary}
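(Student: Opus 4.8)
The plan is to reduce the well-definition of Algorithm~\ref{alg1} to a single application of Theorem~\ref{teo2} at each iteration. Recall that the algorithm fails to be well-defined only if, at some step $k$ for which $x^{k}\notin \mathrm{EP}(F,\Omega)$, no admissible next iterate exists; that is, only if $\mathrm{EP}(F_{k},\Omega)=\varnothing$ for some $k$. Thus it suffices to show that, for every $k\in\mathbb{N}$, the regularized equilibrium problem associated with $F_{k}=F_{\lambda_{k},x^{k}}$ admits a solution.

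First I would fix an arbitrary iterate $x^{k}\in\Omega$ and set $z=x^{k}$ and $\lambda=\lambda_{k}$, so that $F_{k}=F_{\lambda,z}$. Next I would verify that the hypotheses of Theorem~\ref{teo2} hold for this choice: by hypothesis of the corollary, $F$ is $\theta$-undermonotone and satisfies Assumption~\ref{assumption-13-2}, while the standing assumption $\theta<\lambda_{k}$ supplies precisely the strict inequality $\theta<\lambda$ required by that theorem. Applying Theorem~\ref{teo2} then yields a (unique) point $\bar{x}^{\ast}\in\Omega$ with $F_{\lambda_{k},x^{k}}(\bar{x}^{\ast},y)\geq 0$ for all $y\in\Omega$, i.e. $\bar{x}^{\ast}\in\mathrm{EP}(F_{k},\Omega)$.

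Consequently $\mathrm{EP}(F_{k},\Omega)\neq\varnothing$ for every $k$, so whenever the stopping test in step~2 fails, step~3 can be executed by choosing $x^{k+1}=\bar{x}^{\ast}$ (or any element of this nonempty set). Since $x^{k}$ was arbitrary, the iteration is defined for all $k$, which is exactly the assertion that Algorithm~\ref{alg1} is well-defined. I do not expect any genuine obstacle here: the boundedness of $\{\lambda_{k}\}$ plays no role in well-definition (it is reserved for the convergence analysis), and the whole argument rests on checking the per-iteration hypotheses of Theorem~\ref{teo2}, the crucial one being the strict bound $\theta<\lambda_{k}$, which guarantees both existence and uniqueness of the solution of each proximal subproblem.
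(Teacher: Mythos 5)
Your proposal is correct and matches the paper's argument: the paper proves this corollary by the one-line observation that it follows immediately from Theorem~\ref{teo2}, which is exactly your per-iteration application with $z=x^{k}$ and $\lambda=\lambda_{k}$. Your added remarks (that only the strict bound $\theta<\lambda_{k}$ is needed here and that boundedness of $\{\lambda_{k}\}$ is reserved for the convergence analysis) are accurate elaborations of the same proof, not a different route.
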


\begin{proof}
It follows immediately from Theorem~\ref{teo2}.
 \end{proof}
In the remainder of this paper we assume that the assumptions of the previous corollary hold and $\{x^k\}$ is a sequence generated from Algorithm~\ref{alg1}. Taking into account that if  Algorithm~\ref{alg1} terminates after a finite number of iterations, it terminates at an equilibrium point of $F$, from now on, we assume also that  $\{x^k\}$ is an infinite sequence.

\subsection{Convergence Analysis}

In this section we present the convergence of the sequence $\{x^{k}\}$.     

\begin{proposition}
Let $F$ be a pseudomonotone bifunction. Then, 
\begin{equation}\label{ineq3}
\langle \exp^{-1}_{x_{k+1}}x^k,\exp^{-1}_{x_{k+1}}x^\ast\rangle \leq 0, \qquad x^\ast \in EP(F,\Omega).
\end{equation}
\end{proposition}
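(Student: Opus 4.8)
The plan is to read off the claimed inequality directly from the defining property of the iterate $x^{k+1}$, combined with the equilibrium property of $x^\ast$ and pseudomonotonicity. First I would use that, by construction, $x^{k+1}\in EP(F_k,\Omega)$ with $F_k=F_{\lambda_k,x^k}$; unwinding the definition of the equilibrium point set together with the regularized bifunction in \eqref{eq:regularization1}, this means
\[
F(x^{k+1},y)-\lambda_k\langle \exp^{-1}_{x^{k+1}}x^k,\exp^{-1}_{x^{k+1}}y\rangle\geq 0,\qquad \forall\, y\in\Omega.
\]
Specializing to $y=x^\ast$ isolates exactly the inner product I want to bound, giving
\[
\lambda_k\langle \exp^{-1}_{x^{k+1}}x^k,\exp^{-1}_{x^{k+1}}x^\ast\rangle\leq F(x^{k+1},x^\ast).
\]

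Second, I would show the right-hand side is nonpositive. Since $x^\ast\in EP(F,\Omega)$, we have $F(x^\ast,y)\geq 0$ for every $y\in\Omega$, and in particular $F(x^\ast,x^{k+1})\geq 0$. Invoking the pseudomonotonicity of $F$ (item (2) of the definition) applied to the ordered pair $(x^\ast,x^{k+1})$, the inequality $F(x^\ast,x^{k+1})\geq 0$ forces $F(x^{k+1},x^\ast)\leq 0$.

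Finally, combining the two displays yields $\lambda_k\langle \exp^{-1}_{x^{k+1}}x^k,\exp^{-1}_{x^{k+1}}x^\ast\rangle\leq 0$, and since $\lambda_k>0$ I may divide through to conclude $\langle \exp^{-1}_{x^{k+1}}x^k,\exp^{-1}_{x^{k+1}}x^\ast\rangle\leq 0$, as required.

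There is no genuine analytical obstacle here: the argument is essentially the Riemannian transcription of the standard estimate showing that a proximal iterate for an equilibrium problem is of Fej\'er type, and all the geometry is already encapsulated in the inner-product term introduced by the regularization \eqref{eq:regularization1}. The only points demanding care are the direction of the pseudomonotonicity implication---one must apply it to the pair $(x^\ast,x^{k+1})$, exploiting that $x^\ast$ solves EP, rather than to $(x^{k+1},x^\ast)$---and the use of $\lambda_k>0$ to pass from the scaled inner product to the bare inner product in the conclusion.
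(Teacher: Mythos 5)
Your proof is correct and follows essentially the same route as the paper: unwind $x^{k+1}\in \mathrm{EP}(F_k,\Omega)$ via \eqref{eq:regularization1} to bound $\lambda_k\langle \exp^{-1}_{x^{k+1}}x^k,\exp^{-1}_{x^{k+1}}x^\ast\rangle$ by $F(x^{k+1},x^\ast)$, then use $F(x^\ast,x^{k+1})\geq 0$ and pseudomonotonicity to conclude. The paper merely leaves the pseudomonotonicity step and the division by $\lambda_k>0$ implicit, whereas you spell them out, correctly orienting the implication at the pair $(x^\ast,x^{k+1})$.
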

\begin{proof}
From the definition of the iterate $x^{k+1}$ and $F_{k}$ in \eqref{mpp} combined with \eqref{eq:regularization1}, we obtain
\[
\lambda_k \langle \exp^{-1}_{x_{k+1}}x^k,\exp^{-1}_{x_{k+1}}y\rangle \leq F(x^{k+1},y),\qquad y \in\Omega.
\] 
Since $F$ is pseudomonotone and  $EP(F,\Omega)\subset\Omega$, the desired result follows from the last inequality.
 \end{proof}

\begin{definition}\rm A sequence $\{z^k\}$ in the complete metric
space $(M,d)$ is said to be {\it Fej\'er convergent\/} to a nonempty set $\mathcal{S}\subset M$ iff for every
$z \in \mathcal{S}$,
\[
d(z^{k+1},z) \le d(z^k,z)\qquad k=0,1,\ldots.
\]
\end{definition}

The following result is well known and its proof is elementary.

\begin{proposition}\label{prop4}
Let $\{z^k\}$ be a sequence in the complete metric
space $(M,d)$. If $\{z^k\}$ is Fej\'er convergent to a non-empty set
$\mathcal{S} \subset M$, then $\{z^k\}$ is bounded. If, furthermore, an accumulation
point $z$ of $\{z^k\}$ belongs to $\mathcal{S}$, then
$\displaystyle \lim_{k\to\infty} z^k=z$.
\end{proposition}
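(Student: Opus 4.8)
The plan is to exploit the defining monotonicity property of Fej\'er convergence: for \emph{every} point of $\mathcal{S}$, the sequence of distances from the iterates to that point is non-increasing. Since $\mathcal{S}$ is non-empty, I would first fix an arbitrary $\bar{z}\in\mathcal{S}$. The Fej\'er hypothesis then gives $d(z^{k+1},\bar{z})\le d(z^{k},\bar{z})$ for all $k$, so a trivial induction yields $d(z^{k},\bar{z})\le d(z^{0},\bar{z})$ for every $k$. This places the entire sequence inside the closed ball $B(\bar{z},d(z^{0},\bar{z}))$, which establishes boundedness immediately.

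For the second assertion, suppose $z\in\mathcal{S}$ is an accumulation point of $\{z^{k}\}$. Because $z$ itself lies in $\mathcal{S}$, the Fej\'er property applies with this particular point, so $\{d(z^{k},z)\}$ is a non-increasing sequence of non-negative reals. Being monotone and bounded below by $0$, it converges to some limit $\ell\ge 0$; the essential observation is that this convergence holds for the \emph{whole} sequence, not merely along a subsequence. I would then use that $z$ is an accumulation point: there exists a subsequence $\{z^{k_j}\}$ with $z^{k_j}\to z$, and hence $d(z^{k_j},z)\to 0$. Since the full sequence $\{d(z^{k},z)\}$ already converges to $\ell$, every one of its subsequences must converge to $\ell$ as well, which forces $\ell=0$.

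Combining these, $\lim_{k\to\infty} d(z^{k},z)=0$, which is precisely $\lim_{k\to\infty} z^{k}=z$, completing the argument.

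The proof presents no genuine obstacle, as it is elementary; the only point that must be handled carefully is the interplay between monotone convergence and the subsequential limit. One must first establish that the \emph{full} sequence $\{d(z^{k},z)\}$ converges, relying on its monotonicity, and only afterwards identify its limit as $0$ via the convergent subsequence. Trying to deduce convergence of $\{z^{k}\}$ directly from the accumulation point, without first invoking the monotonicity coming from the Fej\'er property, would not succeed, since an arbitrary bounded sequence may have an accumulation point in $\mathcal{S}$ without converging.
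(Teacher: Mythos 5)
Your proof is correct and complete: the boundedness argument via the closed ball $B(\bar{z},d(z^{0},\bar{z}))$ and the identification of the monotone limit of $\{d(z^{k},z)\}$ as $0$ through the convergent subsequence is exactly the standard argument. The paper itself omits the proof, stating only that the result is ``well known and its proof is elementary,'' and your proposal is precisely the canonical elementary proof being referred to, including the correct order of steps (monotone convergence of the full distance sequence first, then the subsequential limit to force $\ell=0$).
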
 


Now, we present our main convergence result.

\begin{theorem}\label{teo3}
Assume that $F$ is pseudomonotone. The sequence  $\{x^k\}$ converges to a point in \emph{EP}(F, $\Omega$). 
\end{theorem}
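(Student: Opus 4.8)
The plan is to establish convergence in two stages: first show that $\{x^k\}$ is Fej\'er convergent to the solution set EP$(F,\Omega)$, and then identify an accumulation point that lies in EP$(F,\Omega)$, so that Proposition~\ref{prop4} forces the whole sequence to converge to it. The Fej\'er property is already within reach: inequality \eqref{ineq3} from the preceding proposition gives $\langle \exp^{-1}_{x^{k+1}}x^k,\exp^{-1}_{x^{k+1}}x^\ast\rangle\leq 0$ for every $x^\ast\in$ EP$(F,\Omega)$. Applying the comparison inequality \eqref{E:2lawcos} of Theorem~\ref{T:lawcos} to the geodesic triangle with vertices $x^k$, $x^{k+1}$, $x^\ast$ (identifying $x_{i+2}=x^{k+1}$, $x_{i+1}=x^k$, $x_i=x^\ast$) yields
\[
d^2(x^{k+1},x^\ast)\leq d^2(x^k,x^\ast)-d^2(x^k,x^{k+1})+2\langle \exp^{-1}_{x^{k+1}}x^k,\exp^{-1}_{x^{k+1}}x^\ast\rangle\leq d^2(x^k,x^\ast).
\]
Hence $d(x^{k+1},x^\ast)\leq d(x^k,x^\ast)$, which is exactly the Fej\'er condition relative to EP$(F,\Omega)$ (nonempty by Theorem~\ref{teo1} under the standing hypotheses). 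By Proposition~\ref{prop4}, $\{x^k\}$ is bounded, and so by the Hopf--Rinow theorem it admits a convergent subsequence $\{x^{k_j}\}\to \bar{x}$.

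Next I would show that the accumulation point $\bar{x}$ solves EP. The telescoping of the displayed inequality gives $\sum_k d^2(x^k,x^{k+1})<\infty$, so $d(x^k,x^{k+1})\to 0$; in particular $x^{k_j+1}\to\bar{x}$ as well. The defining property \eqref{mpp} of the iterate, unwound through \eqref{eq:regularization1}, gives for each $k$ and every $y\in\Omega$
\[
\lambda_k\langle \exp^{-1}_{x^{k+1}}x^k,\exp^{-1}_{x^{k+1}}y\rangle\leq F(x^{k+1},y).
\]
The plan is to pass to the limit along the subsequence. The left-hand side tends to $0$: the sequence $\{\lambda_k\}$ is bounded, the vector $\exp^{-1}_{x^{k_j+1}}x^{k_j}$ has norm $d(x^{k_j},x^{k_j+1})\to 0$, and $\exp^{-1}_{x^{k_j+1}}y$ stays bounded since $x^{k_j+1}\to\bar{x}$; continuity of $\exp^{-1}$ on the Hadamard manifold makes this rigorous. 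Using $\mathcal{H}3$ (upper semicontinuity of $x\mapsto F(x,y)$) on the right, we obtain in the limit $0\leq \limsup_j F(x^{k_j+1},y)\leq F(\bar{x},y)$ for every $y\in\Omega$; hence $\bar{x}\in$ EP$(F,\Omega)$. Finally, since the accumulation point $\bar{x}$ lies in the set to which $\{x^k\}$ is Fej\'er convergent, the second part of Proposition~\ref{prop4} gives $\lim_{k\to\infty}x^k=\bar{x}\in$ EP$(F,\Omega)$, completing the proof.

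I expect the main obstacle to be the limit-passage step, specifically ensuring that the inequality $\langle \exp^{-1}_{x^{k+1}}x^k,\exp^{-1}_{x^{k+1}}y\rangle\to 0$ is handled correctly: one must argue uniform boundedness of the tangent vectors $\exp^{-1}_{x^{k_j+1}}y$ and invoke joint continuity of the inverse exponential map in both footpoint and target, which is where the Hadamard structure (uniqueness and smooth dependence of geodesics) and boundedness of $\{x^k\}$ are genuinely used. The summability $\sum_k d^2(x^k,x^{k+1})<\infty$ is the technical linchpin that guarantees $d(x^k,x^{k+1})\to 0$; I would extract it carefully from the telescoping of the Fej\'er inequality rather than assume it, since the convergence of the residuals is precisely what drives the whole argument.
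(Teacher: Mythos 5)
Your proposal is correct and follows essentially the same route as the paper's proof: Fej\'er convergence to $\mathrm{EP}(F,\Omega)$ via inequality \eqref{E:2lawcos} combined with \eqref{ineq3}, boundedness and a convergent subsequence via Proposition~\ref{prop4} and Hopf--Rinow, passage to the limit in the proximal inequality using $\mathcal{H}3$ and $d(x^k,x^{k+1})\to 0$, and then the second part of Proposition~\ref{prop4} to upgrade subsequential to full convergence. The only cosmetic differences are that you extract $d(x^k,x^{k+1})\to 0$ by explicit telescoping (where the paper simply asserts it from the Fej\'er display) and you bound the inner product by Cauchy--Schwarz, whereas the paper writes it as $d(x^{k_j+1},x^{k_j})\,d(x^{k_j+1},y)\cos\theta_{k_j}$ --- the same estimate in different clothing.
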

\begin{proof}
Take $\bar{x}\in \mbox{EP}(F, \Omega)$. Using inequality \eqref{E:2lawcos} with $x_{i}=\bar{x}$, $x_{i+1}=x^{k}$ and $x_{i+2}=x^{k+1}$, we obtain 
\[
d^2(x^{k+1},\bar{x})+ d^2(x^{k+1}, x^k)-2 \langle \exp^{-1}_{x^{k+1}}x^k,\exp^{-1}_{x^{k+1}}\bar{x}\rangle\leq d^2(x^{k},\bar{x}). 
\]
Since $F$ is pseudomonotone and $x^{*}\in \mbox{EP}(F, \Omega)$,  combining  inequality \eqref {ineq3} with last inequality and taking into account that $d^2(x^{k+1}, x^k)>0$, it follows that $\{x^{k}\}$ is Fej\'er convergent to the set  $\mbox{EP}(F, \Omega)$. So, applying Proposition~\ref{prop4} with $z^{k}=x^{k}$, $k\in\mathbb{N}$,  and $\mathcal{S}=\mbox{EP}(F, \Omega)$, we have that $\{x^{k}\}$ is a bounded sequence. In particular,  from the Hopf-Rinow Theorem, there exists a subsequence $\{x^{k_{j}}\}$ of $\{x^{k}\}$ converging to some point $x^{*}$. Besides, as $\{\lambda_{k}\}$ also is a bounded sequence, without loss of generality we can suppose that $\{\lambda_{k_{j}}\}$ is a subsequence of $\{\lambda_{k}\}$ converging to some $\lambda_{*}$. Given $y\in \Omega$ and considering that the angle between the vectors $\exp^{-1}_{x_{k_{j}+1}}x^{k_{j}}$, $\exp^{-1}_{x_{k_{j}+1}}y$ is denoted by $\theta_{k_{j}}=\, < \!\!\!)\,\left(\exp^{-1}_{x_{k_{j}+1}}x^{k_{j}},\, \exp^{-1}_{x_{k_{j}+1}}y\right)$,  taking into account that $\|\exp^{-1}_{x}\tilde{x}\|=d(x,\tilde{x})$, for all $x,\tilde{x}\in M$,
we obtain
\begin{equation}\label{eq:cos10}
\langle \exp^{-1}_{x_{k_{j}+1}}x^{k_{j}},\exp^{-1}_{x_{k_{j}+1}}y\rangle =d(x^{k_{j}+1}, x^{k_{j}})d(x^{k_{j}+1},y)\cos\theta_{k_{j}}, \qquad j\in\mathbb{N}.
\end{equation}
Now, from the definition of $x^{k_{j}+1}$ in \eqref{mpp} combined with  \eqref{eq:regularization1} and \eqref{eq:cos10}, we get
\begin{equation}\label{ineq:conv10}
F(x^{k_j+1},y)-\lambda_{k_{j}} d(x^{k_{j}+1}, x^{k_{j}})d(x^{k_{j}+1},y)\cos\theta_{k_{j}}\geq 0, \qquad j\in\mathbb{N}.
\end{equation}
Since $F(\cdot\, , y)$ is upper semicontinuous, $\{\cos \theta_{k_{j}}\}$ is bounded and $\{d(x^{k_j},x^{k_j+1})\}$ (resp. $\{\lambda_{k_{j}}\}$) goes to zero (resp. $\lambda_{*}$) as $j$ goes to infinity, we have
\[
F(x^\ast,y)\geq \lim F(x^{k_j+1},y)= \lim_{j\to+\infty}[F(x^{k_j+1},y)-\lambda_k \langle \exp^{-1}_{x^{k_j+1}} x^{k_j}, \exp^{-1}_{x^{k_j+1}} y\rangle]\geq 0,
\]
where the last inequality follows from \eqref{ineq:conv10}. Therefore, the desired result follows of the arbitrary of $y\in \Omega$.
\end{proof}

Following the notations presented in~\cite{CLMM2012}, let us consider, for each $\lambda>0$ fixed,  the following set-valued operator $J^F_{\lambda} :M \to 2^{\Omega}$ given by
\begin{equation}\label{eq:resolventColao}
J^F_{\lambda}(x):=\{z\in M:\; F(z,y)-\lambda\langle \exp^{-1}_{z}{x},\exp^{-1}_{z}y\rangle\geq 0,\quad y\in \Omega\}.
\end{equation}
In the particular case where $F$ is monotone, $J_{\lambda}^{F}$ is a firmly nonexpansive operator, i.e., for any $x, y\in \Omega$, the function $\phi:[ 0,1 ]\rightarrow [0, \infty]$, defined by $\phi(t) = d (\gamma_1(t),\gamma_2(t))$, is nonincreasing, where $\gamma_1(t)$ and $\gamma_2(t)$ denote the geodesics joining $x$ to $J_{\lambda}^{F}(x)$ and $y$ to $J_{\lambda}^{F}(y)$, respectively. The proof of convergence presented in ~\cite{CLMM2012} is based exactly on this property.  Now, we present an example just for  illustrating that, as it is in the Euclidean context, the assumed conditions in our convergence result are weak to ensure the mentioned  property. This is one of the reasons why, in the present paper, do not we  followed the approach presented in~\cite{CLMM2012} to obtain Theorem~\ref{teo3}.


\begin{example}\label{ex1}\emph{(see~\cite{IS2010})} Let us consider $M=\mathbb{R}$, $\Omega=[1/2,1]$ and $F:[1/2,1]\times [1/2,1]\rightarrow \mathbb{R}$, given by: \[
F(x,y)=x(x-y).
\]
It is easy to check that $F$ is {\it $1$-undermonotone} and {\it pseudomonotone}. Now, we show that the resolvent of the bifunction $F$ is not {\it firmly nonexpansive}. Take $x^0=1/2$ and $\lambda =7$. It follows that  
\[
J^F_{\lambda}(x^0)=\frac{\lambda}{\lambda-1}x^0 \; \therefore \; x^{k}:=\left(J^F_{\lambda}(x^0)\right)^k= \min \left\lbrace 1, \left( \frac{\lambda}{\lambda-1}\right)^k x^0\right\rbrace.
\] 
Hence, $x^1=\lambda/2(\lambda-1)=7/12$ and $x^2=\lambda^2/2(\lambda-1)^2=49/72$. Now, denote $T(x)=J^F_{\lambda}(x)$, $x=x^0$, $y=T(x)=x^1$ and $T(y)=x^2$. So, 
\begin{eqnarray}
\langle \exp^{-1}_{T(x)}{T(y)},\exp^{-1}_{T(x)}x\rangle +\langle \exp^{-1}_{T(y)}{T(x)},\exp^{-1}_{T(y)}y \rangle 
  &=& (x^2-x^1)(x^0-x^1)+ (x^1-x^2)(x^1-x^2),  \nonumber \\
   &=&(x^2-x^1)(x^0-2x^1+x^2), \nonumber \\
      &=& \frac{1}{2}(x^2-x^1)\left(\frac{\lambda}{\lambda-1}-1\right)^2,\nonumber\\ 
      &>& 0.\nonumber  
\end{eqnarray}
Therefore, by~\emph{\cite[Proposition 5]{LLMW2011}} the resolvent $J^F_{\lambda}$ is not firmly nonexpansive.
\end{example}

\subsection{Finite Termination}

In this section, following the ideas presented in~\cite{M2007}, we obtain an important result of finite termination for any sequence generated from Algorithm~\ref{alg1}.


Next definition was introduced in the context of Hilbert spaces, see~\cite{M2007}.
\begin{definition}\label{condicionada}
A possibly non degenerated $F$  is said to be $\rho$-conditioned if and only if there exist positive numbers $\tau$ and $\rho$ such that
\begin{equation}\label{ineq5}
-F (x, P_S(x)) \geq \tau \emph{dist}^{\rho}(x, S), \qquad x\in \Omega,
\end{equation}
where $S=\emph{EP}(F,\Omega)$. 
\end{definition}
Note that, if $F$ is degenerate, i.e., exists a function $\phi :\Omega \to \mathbb{R}$ such that $F(x,y)=\phi (y)-\phi (x)$, then $S=argmin _{\Omega}\phi$.  In this particular case, 
if $\phi$ is convex and $\rho=1$, Definition~\ref{condicionada} reduces to 
\begin{equation}\label{ineq:weaksharp11-2013}
\phi(x)\geq \phi(\bar{x})+\alpha d_{S}(x),\quad \bar{x}\in S, \quad x\in\Omega.
\end{equation}
This notion has been introduced in~\cite{LMWY2011}, in the Riemannian context,  and says that $S$  is a weak sharp minima set for the minimization problem 
\begin{equation}\label{minoptprob1}
\min\{\phi(x):\;  x\in \Omega\},
\end{equation}
 with modulus $\alpha>0$. 

Now, we present a finite convergence result for the sequence $\{x^{k}\}$. 

\begin{theorem}\label{teo4}
Assume that S is a non-empty set, the inequality \eqref{ineq5} holds with $\rho\in ]0,1]$ and  the sequence $\{x^k\}$ converges to $x^\ast \in S$.  Then, $x^*$ is reached in a finite number of iterations, i.e., there exists a number $k_0\in \mathbb{N}$ such that 
\[
x^{k_0}=x^\ast.
\]
\end{theorem}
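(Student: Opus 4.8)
The plan is to extract, from a single proximal step, a quantitative inequality linking $\mathrm{dist}(x^{k+1},S)$ to the step length $d(x^{k+1},x^k)$, and then show this inequality is incompatible with convergence to $S$ unless the sequence actually hits $S$ in finitely many steps.

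First I would exploit the defining inclusion $x^{k+1}\in\mathrm{EP}(F_k,\Omega)$. Expanding $F_k=F_{\lambda_k,x^k}$ via \eqref{eq:regularization1} and testing the equilibrium inequality at the feasible point $y=P_S(x^{k+1})\in S\subset\Omega$ gives
\[
\lambda_k\,\langle \exp^{-1}_{x^{k+1}}x^k,\exp^{-1}_{x^{k+1}}P_S(x^{k+1})\rangle \leq F\!\left(x^{k+1},P_S(x^{k+1})\right).
\]
The conditioning hypothesis \eqref{ineq5} bounds the right-hand side from above by $-\tau\,\mathrm{dist}^{\rho}(x^{k+1},S)$, while Cauchy--Schwarz together with $\|\exp^{-1}_pq\|=d(p,q)$ bounds the left-hand side from below by $-\lambda_k\,d(x^{k+1},x^k)\,\mathrm{dist}(x^{k+1},S)$, using $d(x^{k+1},P_S(x^{k+1}))=\mathrm{dist}(x^{k+1},S)$. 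Chaining these yields
\[
\tau\,\mathrm{dist}^{\rho}(x^{k+1},S)\leq \lambda_k\,d(x^{k+1},x^k)\,\mathrm{dist}(x^{k+1},S).
\]

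Next I would argue by contradiction, supposing the algorithm never reaches $S$, so that $x^{k+1}\notin S$ and $\mathrm{dist}(x^{k+1},S)>0$ for every $k$; this is precisely the standing infinite-sequence assumption, since the stopping test is membership in $\mathrm{EP}(F,\Omega)=S$. Dividing by $\mathrm{dist}(x^{k+1},S)$ and using $1-\rho\geq 0$ (equivalently $\rho\in ]0,1]$) I obtain
\[
\tau\leq \lambda_k\,d(x^{k+1},x^k)\,\mathrm{dist}^{1-\rho}(x^{k+1},S).
\]
Letting $k\to\infty$, the hypothesis $x^k\to x^\ast\in S$ gives $d(x^{k+1},x^k)\to 0$ and $\mathrm{dist}(x^{k+1},S)\leq d(x^{k+1},x^\ast)\to 0$ by the triangle inequality; since $\{\lambda_k\}$ is bounded and $1-\rho\geq 0$, the whole right-hand side tends to $0$. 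This forces $\tau\leq 0$, contradicting $\tau>0$, so there must exist $k_0$ with $x^{k_0}\in S$. To see this index is terminal, note that $\exp^{-1}_{x^{k_0}}x^{k_0}=0$, whence $F_{k_0}(x^{k_0},y)=F(x^{k_0},y)\geq 0$ for all $y\in\Omega$; thus $x^{k_0}\in\mathrm{EP}(F_{k_0},\Omega)$ and, by the uniqueness in Theorem~\ref{teo2}, $x^{k_0+1}=x^{k_0}$. Inductively the sequence is constant from $k_0$ on, so its limit equals $x^{k_0}$, giving $x^{k_0}=x^\ast$.

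The main obstacle I anticipate is the careful bookkeeping around the exponent $\rho$: the division by $\mathrm{dist}(x^{k+1},S)$ is legitimate only while $x^{k+1}\notin S$, and the sign condition $1-\rho\geq 0$ is exactly what makes $\mathrm{dist}^{1-\rho}(x^{k+1},S)$ vanish (or stay equal to $1$ when $\rho=1$) instead of blowing up, so that the right-hand side genuinely tends to $0$. Making these two points watertight—and reading $d(x^{k+1},x^k)\to 0$ and $\mathrm{dist}(x^{k+1},S)\to 0$ directly off $x^k\to x^\ast$ rather than invoking the full Fej\'er machinery—is where the argument needs the most care.
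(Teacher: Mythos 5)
Your proposal is correct and follows essentially the same route as the paper: both test the proximal inequality at $y=P_S(x^{k+1})$, combine the $\rho$-conditioning bound with Cauchy--Schwarz to get $\tau\,\mathrm{dist}^{\rho}(x^{k+1},S)\leq \lambda_k\, d(x^{k+1},x^k)\,\mathrm{dist}(x^{k+1},S)$, and derive a contradiction with convergence using $\rho\in\left]0,1\right]$ (the paper rearranges to $\bar{\lambda}^{-1}\tau\,\mathrm{dist}^{\rho-1}(x^{k+1},S)\leq d(x^k,x^{k+1})$ and concludes $d(x^k,x^{k+1})\nrightarrow 0$, which is the same estimate you obtain by moving $\mathrm{dist}^{1-\rho}$ to the other side). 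Your closing step---using the uniqueness from Theorem~\ref{teo2} to show the sequence is constant from $k_0$ on, hence $x^{k_0}=x^\ast$---is a detail the paper's proof leaves implicit, and it is a welcome tightening rather than a departure.
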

\begin{proof}
Assume that $dist(x^{k+1}, S)>0$ for all $k\in \mathbb{N}$. From \eqref{ineq5}, we obtain
\[
\tau \emph{dist}^{\rho}(x^{k+1}, S)\leq -F(x^{k+1},P_S(x^{k+1}))\leq - \lambda_k\langle\exp^{-1}_{x_{k+1}}x^k,\exp^{-1}_{x^{k+1}}P_S(x^{k+1}) \rangle \leq \bar{\lambda}d(x^k, x^{k+1})dist(x^{k+1},S),   
\]
for all $k\in \mathbb{N}$ and $\bar{\lambda}=\sup_{k\in \mathbb{N}}\{\lambda_k\}$. 
From the last inequality, we get    
\begin{equation*}\label{ineq6}
\bar{\lambda}^{-1}{\tau} \emph{dist}^{\rho-1}(x^{k+1}, S)\leq d(x^k, x^{k+1}).
\end{equation*}
Since $\rho-1\leq 0$ and $dist(x^{k+1},S)\to 0$, it follows that $d(x^k, x^{k+1})\nrightarrow 0$, 
which contradicts the fact that $\{x^k\}$ is a convergent sequence. Therefore, the result of the theorem is true. 
\end{proof}

\begin{remark}$\;$
\begin{itemize}
\item[i)] Note that the last theorem extend  the finite termination result in~\cite{BC2012} to minimization problems whose constrained set is not necessarily whole manifold. This holds because the minimization problem, as  \eqref{minoptprob1}, can be formulated as an equilibrium problem where the bifunction, associated to this particular problem, satisfies \eqref{ineq5}  with $\rho=1\in  ]0,1]$, or equivalently,  $S=argmin _{\Omega}\phi$ is a weak sharp minima set for the minimization problem \eqref{minoptprob1} (see \eqref{ineq:weaksharp11-2013}). For a characterization of the condition \eqref{ineq:weaksharp11-2013}, in the case of convex minimization problems on Hadamard manifolds,  see \cite{LMWY2011}.
\item [ii)]  In regards to other problems that may be interpreted as equilibrium problems, we emphasize that our resulted of finite convergence can be applied  for the proximal point methods to finding singularities of single valued monotone vector fields (see \cite{CFPN2006}), multivalued monotone vector fields (see \cite{LLMM2009}) and, hence, for the  variational inequality problem (see \cite{N2003}) among others, when   the bifunction associated to each of these problems satisfies \eqref{ineq5}  with $\rho\in ]0,1]$.  We intend, in future work, to identify the condition \eqref{ineq5} in each particular instance listed earlier, and to investigate possible characterizations. 
\end{itemize}
\end{remark}


In order to illustrate the result of Theorem \ref{teo4}, let us consider the elementary Example \ref{ex1}. In this case, $S=EP(F,\Omega)=\{1\}$ and     
\[
F(x^{k+1},1)=x^{k+1}(1-x^{k+1})=x^{k+1}dist(x^{k+1},S)\geq \frac{1}{2}dist(x^{k+1},S),
\]
where $\rho=1$ and $\tau=1/2$. Therefore, the sequence $\{x^k\}$ converges to $x^\ast=1$ in a finite number of \mbox{iterations}.

\section{Worthwhile Transitions to Variational Traps on an Hadamard Manifold}

In this section, devoted to applications, we consider stability and
change dynamics. They are everywhere. For example, among many others, habits
and routines formation and break, and exploration-exploitation
stability/change issues (where exploration refers to discovery, search,
innovation\ldots).  A recent ``variational rationality" approach in Behavioral
Sciences (see \cite{S2009,S2010}) modelized and unified a huge list of such
stability and change dynamics. It shows how a succession of worthwhile stays
and changes can converge to a variational trap (to be defined below). In an
important paper, Colao et al.~\cite{CLMM2012} gives, first in a static framework,
and then using an approximation process, three applications of equilibrium
problems (EP) on Hadamard manifolds: mixed variational inequalities (MVI),
fixed points (FP) of set valued mappings, and Nash equilibrium (NE) in non
cooperative games. In this last section related to applications, we show
that their approach represents a special, but very important benchmark case,
of a worthwhile stability and change dynamic on an Hadamard manifold. The
Variational rationality (VR) approach (see \cite{S2009,S2010}) rests on three
main concepts: worthwhile single stays and changes, worthwhile transitions
(defined as successions of worthwhile single stays and changes), and
stationary/variational traps. \ The plan of this section works as follows: We
\begin{itemize}
\item [1)] start our application with a critic of a Nash equilbirium as a purely
static concept, which have no real existence in a dynamic setting. In
contrast, we define, in an informal way, the concept of a variational trap,
which is both, i) a stationary trap (a stable position with respect to any
worthwhile change), and ii) a reachable position, starting from the initial
position, following a worthwhile transition. Then, to show the importance of
this application, we give, among many other examples, a short list of \
traps in Behavioral Sciences (Psychology, Economics, Management Sciences,
Sociology, Political Sciences, Game theory, Decision theory, Artificial
Intelligence,\ldots) and Applied Mathematics;

\item [2)] show why the context of an Hadamard manifold is so important for the
examination of stability and change dynamics;

\item [3)] summarize the Variational rationality (VR) approach (\cite{S2009,S2010}) in the context of this paper, to be able to define and modelize the
three main (VR) concepts of worthwhile single stays and changes, worthwhile
transitions and stationary/variational traps in this context;

\item [4)] examine, in the Euclidian case, the variational trap problem, which
refers to the possible convergence of a succession of worthwhile single
changes and stays, moving from a stationary trap to the next, ending in a
variational trap;

\item [5)] examine the variational trap problem in the case of an Hadamard manifold;

\item [6)] show the Variational rationality flavor of all the hypothesis done in
this paper;

\item [7)] Finally, we interpret exact and inexact solutions to the equilibrium
problem in variational rationality terms.
\end{itemize}

\subsection{Critic of the Nash Equilibrium Static Concept in a Dynamic Setting}

The Variational rationality (VR) approach (see \cite{S2009,S2010}) makes a
critic of an optimum, and more generally of a Nash equilibrium, as a purely
static concept, a stability concept, where agents, being at equilibrium,
prefer to stay there than to move, while they do not know why they are
there, by pure change, an event of zero measure\ldots. On the contrary, the
(VR) approach urges to focus attention, for dynamical applications in
Behavioral Sciences, on traps, and more precisely variational traps. These
variational traps satisfy two conditions:
\begin{itemize}
\item [i)] stability issue: they are stable with respect to worthwhile changes
(stationary traps): being there, at equilibrium, agents prefer to stay than
to move because to stay is worthwhile and to move is not. Nash equilibrium are
also stable positions, not with respect to worthwhile changes, but with
respect to advantages to change (ignoring resistance to change, in a world
without frictions). It is as if agents, at the very beginning of the process,
by luck, find them in the trap;
\item [ii)] reachability and desirability issues with respect to a given subset of
initial positions: traps must be also reachable and desirable to reach,
following a succession of worthwhile single stays and changes (a worthwhile
transition), moving from the initial position to the trap. In this general
case, the agent is not in the trap at the beginning of the process, as a
static equilibrium supposes. Then, the agent prefers to move towards the
trap than to stay, following a succession of worthwhile single stays and
changes.
\end{itemize}
This shows that a Nash equilibrium is a very specific case of a variational
trap, which is difficult to justify in dynamic settings. A position can be
stable, like a Nash equilibrium, but will have no practical sense if it is
not reachable from the initial position. In this case this is an empty
concept.

\subsection{Examples of Traps in Behavioral Sciences}

In Behavioral Sciences, Alber and Heward in~\cite{AH1996} noticed that \textquotedblleft relatively simple response is necessary to enter the trap, yet once entered, the trap
cannot be resisted in creating general behavior changes\textquotedblright. Among a huge list of traps, in a lot of different disciplines, let us give some of them:

\begin{itemize}
\item[A)] In Psychology, Baer and Wolf in~\cite{BW1970} seem to be the first to use the term behavioral trap in term of the end of a reinforcement process, describing
\textquotedblleft how natural contingencies of reinforcement operate to promote and maintain
generalized behavior changes\textquotedblright. Much later, Plous in~\cite{P1993} defines traps as
more or less easy to fall into and more or less difficult to get out. He
lists five main behavioral traps: investment, deterioration, ignorance and
collective traps\ldots. Then, several authors emphasized, again, that
behavioral traps are the ends of reinforcement processes, see Stephen \cite{S2004}. Also, Baumeister in~\cite{B2002}, and Baumeister and Heatherton in~\cite{BH1996} consider ego depletion traps, due to fatigue costs, in the context of self regulation
failures. Cognitive and emotional traps refer to all-or-nothing thinking,
labeling, over generalization, mental filtering, discounting the positive,
jumping to conclusions, magnification, emotional reasoning, should and
shouldn't statements, and personalizing the blame \ldots.

\item[B)] In Economics and Decision Sciences, traps refer to hidden biases and
heuristics. For example anchoring, status quo, sunk costs, confirming
evidence, framing, estimation and forecasting traps, see~Hammond et al. \cite{HRKR1998}. Traps also represent habits and routines.

\item[C)] In Management Sciences, Levinthal and March in~\cite{LM1993} define, at the
organizational level, success and failure traps, in the context of the so
called \textquotedblleft  myopia of learning\textquotedblright effect.

\item[D)] In Development theory, poverty traps is a main issue. To explain their
formation, Appadurai in~\cite{A2004} defined aspiration traps which describe the
inability to aspire of the poors, (see Heifetz and Minelli~\cite{HM2006}, Ray~\cite{R2006}).
\end{itemize}

Traps refer to rather easy to reach (feasibility and desirability issues)
and difficult to leave (stability issue) positions. They generalize critical
points, optima, Nash equilibria, fixed points, Pareto optima, and mixed
variational equilibria, as reachable, desirable and stable end points of a
dynamical process. They can represent habits, routines, rules, conventions,\ldots.

\subsection{Hadamard Manifolds. The Modelization of the Repeated Regeneration
of Resources.}

A striking advantage to consider equilibrium problems on an Hadamard
manifold is to allow to consider, each period, repeated constraints, which
balance the repeated depletion and regeneration of resources, helping to
modelize human behavior in a realistic dynamic setting. An important
constraint, almost always neglected in the economic literature, is that the
agent must spend, and then, regenerate his depleted energy as time evolves
(see the Ego-depletion theory in Psychology, Baumeister in~\cite{B2002}). To
satisfy their needs, agents must spend effort and energy to gather means in
order to be able to acquire (produce, buy, sell, eliminate, \ldots ) final
goods which will satisfy partially these needs. Then, each period, the lost
energy must be recovered.

Consider an agent who performs, each period, a list of activities $%
x=(x^{i},x^{j})$, where activities $i\in I$ produce daily vital energy for
the agent (like eating, resting, holidays, sports, healthy activities, arts,
\ldots ), giving further motivations to act, and activities $j\in J$ consume
energy (working, thinking, \ldots ). Let $e_{+}^{i}(x^{i})\geq 0$ be the
energy produced by doing action $x^{i}\in \mathbb{R}$ and $%
e_{-}^{j}(x^{j})\geq 0$ be the energy consumed by doing action $x^{j}\in 
\mathbb{R}$. Then, the regeneration of vital energy imposes the constraint $%
\Sigma _{i\in I}e_{+}^{i}(x^{i})-\Sigma _{j\in J}e_{-}^{j}(x^{j})=E>0$.
Production and consumption functions of energy can be increasing and convex
(the more an agent carries out an activity, the more he produces and
consumes energy, at an increasing rate). In the quadratic case, the
expression $\Sigma _{i\in I}(x^{i})^{2}-\Sigma _{j\in J}(x^{j})^{2}=E>0$
defines an hyperboloid. A more realistic example can be given where
production functions of energy are increasing, concave, and consumption
functions of energy are increasing convex. More generally each activity can
both consume and produce some energy.

\subsection{The VR Approach. A Model for Worthwhile Changes and Variatonal
Traps}

It is time to define and modelize, right now, the three main (VR) concepts,
worthwhile single changes and stays, worthwhile transitions, and variational
traps. The (VR) variational rationality approach ~\cite{S2009,S2010}
advocates that agents are \textquotedblleft variationally rational".
Following the famous Prospect theory with riskless choice in Economics
(Tversky and Kahneman~\cite{TK1991}) this \textquotedblleft variational
rationality" approach considers that agents look at changes, rather than at
endowments, stocks and wealth to evaluate their current utility. Then, it
goes a step further, in a true dynamic context, where the past, the present
and the future matter to determine a behavior (a succession of actions). It
emphasizes, first, that the main variational question is \textquotedblleft
should I stay, should I go?". Then, it advocates that, a lot of times,
agents do not optimize (contrary to Kahneman and Tversky~\cite{KT1979}, who
suppose that they do). Instead it considers that agents can
\textquotedblleft muddle through" (see Lindboom~\cite{L1959} in Political
Sciences), behave as bounded and procedural rational agents (Simon~\cite%
{S1955}, in Economics and Management), as well as \textquotedblleft
practical rational" agents (Bratman~\cite{B1981} in Philosophy, Wooldridge~%
\cite{W2000}, in Artificial Intelligence). To unify all these different
points of view in a lot of different disciplines, the variational
rationality approach considers that, each step of a behavioral process
(defined as a succession of actions), an agent tries to perform worthwhile
changes or stays.

Let us summarize very succintly the main aspects of this recent
\textquotedblleft variational rationality" approach. For simplification, let
us consider the case of an agent. For more complex situations and a lot of
variants, see~\cite{S2009,S2010}. The case of a game with interrelated
agents will be examined below, to compare our findings with those of \cite{CLMM2012}. Given an agent and his current experience $e\in E$ (which
depends of the sequence of his past actions including the last action $z$
which has been done$,$ his motivation to change $\mathcal{M}_{e}(z,y)\in 
\mathbb{R}$ from repeating the old action $z,$ to do a new action $y,$ must
be higher than a choosen and satisficing worthwhile to change ratio $\lambda
>0$ time his resistance to change $R_{e}(z,y)\in \mathbb{R}_{+}$. This ratio
is adaptive. Then, the core of this construction is the following
\textquotedblleft worthwhile to change or stay" inequality: if $z$ is the
last past action which have been done and $y$ is a new action planned to be
done in a near future, it is worthwhile to change ($z\curvearrowright y)$
than to stay ($z\curvearrowright z)$ iff $\mathcal{M}_{e}(z,y)\geq \lambda
R_{e}(z,y)$ where:

\begin{itemize}
\item[-] $\mathcal{M}_{e}(z,y)=U\left[ A_{e}(z,y)\right] $ is the utility or
pleasure, $U\left[A_{e}\right]$, of advantages to change rather than to
stay, $A_{e}=A_{e}(z,y)$;

\item[-] $R_{e}(z,y)=D\left[ I_{e}(z,y)\right]$ is the desutility or pain, $D%
\left[I_{e}\right] ,$ of the inconvenients to change $I_{e}=I_{e}(z,y)\geq
0. $
\end{itemize}

The worthwhile to change rather than to stay payoff, at $z,$ is $\Delta_{\lambda,e}(z,y)=\mathcal{M}_{e}(z,y)-\lambda R_{e}(z,y)$ and the worthwhile
\textquotedblleft change rather than to stay" set, at $z$ is $W_{\lambda
,e}(z)=\left\{ y\in \Omega ,\Delta _{\lambda ,e}(z,y)\geq 0\right\} $.

The goal of this variational approach is to examine the dynamics of a
succession of worthwhile stays and changes $x^{k+1}\in W_{\lambda
_{k+1,e^{k}}}(x^{k})$, $\lambda _{k+1}>0$, $k\in \mathbb{N}$, where $%
e^{k}=E(X^{k})\in E$ is the experience of the agent at step $k$, which
depends of the history of past actions $X^{k}=(x^{0},x^{1},\ldots ,x^{k})$,
as well as, in more complex cases, the history of the actions of other
agents and the environment. Each step, given the current action $x^{k}$, the
agent chooses first a satisficing worthwhile to change ratio $\lambda
_{k+1}>0$ (in order to consider a change as worthwhile this step) and, then,
tries to find a new worthwhile to change action $x^{k+1}\in W_{\lambda
_{k+1,e^{k}}}(x^{k})$, which must belong to the worthwhile to change set.

The main questions are, depending on the evolving context (the given
parameters, each step):

\begin{itemize}
\item[i)] does this process converges, where does it converges, what is the
end (a variational trap where it is worthwhile to stay than to move);

\item[ii)] what are the dynamical properties of the transition from the
given initial point to the end: speed of convergence, convergence in finite
time, \ldots ;

\item[iii)] how efficient is this process: does the end is a critical point,
a local maximum or minimum, an approximate equilibrium, an equilibrium
\ldots?
\end{itemize}

\textbf{Payoff functions.} Let $g_{e}:M\rightarrow \mathbb{R}$ be a payoff
function (performance, revenue, profit \ldots ) to be \mbox{maximized} and $%
f_{e}:M\rightarrow \mathbb{R}$ be a cost function, or an unsatisfied need
function to be minimized, which, both, will depend of the experience $e\in E$
of the agent.

\textbf{Advantages to changes}. In the separable case, advantages to change
are $A_{e}(z,y)=g_{e}(y)-g_{e}(z)$ or $A_{e}(z,y)=f_{e}(z)-f_{e}(y).$ More
generally the advantage to change function is a bifunction $A_{e}:M\times
M\rightarrow \mathbb{R}$.

\textbf{Costs to be able to change and costs to be able to stay. }Given the
experience $e\in E$\textbf{, }costs to be able to change (moving from being
able to repeat the last action $z$ to be able to do a new action $y$) are $%
C_{e}(z,y)$. Costs to be able to stay at $z$ ( i.e to repeat action $z$) are 
$C_{e}(z,z)\geq 0.$

\textbf{Inconvenients to change. }Let $%
I_{e}=I_{e}(z,y)=C_{e}(z,y)-C_{e}(z,z)\geq 0$ be the inconvenients to change
between repeating the same old action $z$, and doing a new action $y$. They
represent, given the agent's experience $e\in {E}$, the difference between
costs to be able to change $C_{e}(z,y)$ and costs to be able to stay, $%
C_{e}(z,z)\geq 0.$

\textbf{Stationary traps (strong and weak)}. Let $\lambda _{\ast }>0$ and $%
e^{\ast }\in E$ be a given satisficing worthwhile to change ratio and a
given experience. Then, $x^{\ast }\in \Omega $ is a strong stationary trap
if 
\[
\Delta _{\lambda _{\ast },e^{\ast }}(x^{\ast },y)=\mathcal{M}_{e^{\ast
}}(x^{\ast },y)-\lambda _{\ast }R_{e^{\ast }}(x^{\ast },y)<0,\qquad y\neq
x^{\ast }. 
\]

This means that it is not worthwhile to move from $x^{\ast },$ i.e, the
worthwhile to change set shrinks to a point, $W_{\lambda _{\ast },e^{\ast
}}(x^{\ast })=\left\{ x^{\ast }\right\}$. This refers to a stability issue,
which takes also care of some resistance to change at $x^{\ast }$. Notice
that a (strong) equilibrium $x^{\ast }\in \Omega $ will not consider any
resistance to change, and will refer to the truncated condition not
motivation to change condition $\mathcal{M}_{e^{\ast }}(x^{\ast },y)<0$, for all $y\neq x^{\ast }$. A weak stationary trap is such that $$\Delta_{\lambda
_{\ast },e^{\ast }}(x^{\ast },y)=\mathcal{M}_{e^{\ast }}(x^{\ast},y)-\lambda _{\ast }R_{e^{\ast }}(x^{\ast },y)\leq 0,\qquad y\in \Omega.$$

\textbf{Variational traps. }An action $x^{\ast }\in X$ is a strong (weak)
variational trap with respect to an initial action $x_{0}$ if, starting from 
$x_{0}$, it exists a succession of \textquotedblleft changes and stays" $%
x^{k+1}\in W_{\lambda _{k+1},e^{k}}(x^{k})$, $k\in \mathbb{N}$, defined by a
convergent sequence of satisficing worthwhile to change ratio and
experiences, 
\[
(\lambda _{k+1},e^{k})\rightarrow (\lambda _{\ast },e^{\ast }),\
k\rightarrow +\infty , 
\]%
such that,

\begin{itemize}
\item[i)] $x^{k}$ converges to this point $x^{\ast }$ (it is worthwhile to
move to this trap $x^{\ast }$, starting from $x^{0}$);

\item[ii)] $x^{\ast }\in \Omega $ is a strong stationary trap, $W_{\lambda
_{\ast },e^{\ast }}(x^{\ast })=\left\{ x^{\ast }\right\} $ (weak stationary
trap).
\end{itemize}

In general a variational trap will be relative to a source set, an initial
subset of actions $\Omega _{0}\subset M$, instead of a given initial action $%
\Omega _{0}=\left\{ x_{0}\right\} .$ This means that, the trap is worthwhile
to reach, starting from any point of the source set $\Omega_{0}$. Notice
that the second condition ii) defines only a strong stationnary trap (a
stability issue, when inertia matters). The first condition tells us that it
is worthwhile to move to this trap $x^{\ast }$, starting from $x^{0}$ (a
feasible and acceptable reachability issue).

\subsection{The Variational Trap Problem in the Euclidian Space}

The variational trap problem refers to the possible convergence of a
succession of worthwhile single stays and changes, moving from a stationary
trap to the next, ending in a variational trap.
\begin{itemize}
\item \textbf{Equilibrium problems and Variational rationality equilibrium
problems.} 
Consider the case where the space of actions is the
Euclidian manifold $M=\Omega =\mathbb{R}^{n}$. 
The equilibrium problem (EP) given in~\eqref{ep}) refers, in Mathematics, to costs and losses
minimization.
Given a differentiable bifunction $A:\Omega \times \Omega \rightarrow \mathbb{R}$, the variational rationality equilibrium problem (VR-EP) refers,
in Behavioral Sciences, to gains and advantages to change maximization (see~\cite{S2009,S2010}). It is: find $x^{\ast }\in \Omega $ such that the
possible advantage to change $A(x^{\ast },y)$ be non positive, i.e, $%
A(x^{\ast },y)\leq 0$, for all $y\in \Omega .$ When $F=-A,$ the (VR-EP)
problem is equivalent to the (EP) equilibrium problem. For easier
comparisons with \cite{CLMM2012}, start from the Mathematical
equilibrium problem (EP). In this last section devoted to applications,
losses and advantages to change do not depend of experience, i.e, $%
F_{e}(x,y)=F(x,y)$ and $A_{e}(x,y)=A(x,y)$ for all $e\in E.$ The simplest
separable and experience independent case, where
\[F(x^{\ast
},y)=f(y)-f(x^{\ast })=g(x^{\ast })-g(y)\geq 0, \, \mbox{for all} \; y\in \Omega,
\]
defines a minimum $x^{\ast }\in \Omega $ of the unsatisfied need function $f$, or a maximum of the payoff function $g$ over $\Omega .$

\item \textbf{Changes. }Let $\Omega $ be the space of actions and consider an
agent who, each period, looks at three consecutive actions, $x,z,$ $y\in
\Omega $, where $x$ and $z$ represent past actions, the two last actions he
has done, and $y$ is the future action he plans to do. Hence, in this
specific context, before looking at doing the future action $y$, the current
experience of the agent is $e=(x,z)\in \Omega \times \Omega ={E}$. If the
agent plans to do an action $y$ which is less similar to the past action
than the present action, we will suppose that his costs to be able to do
this new action $y$ increases with respect to the cost he has spend to be
able to do the last past action $z$. In the opposite case this cost will be
higher.

\item \textbf{Pleasure and Pain functions. }In the specific context of this paper,
the equilibrium problem (EP) on an Hadamard manifold, identify pleasure
(utility) to advantages to change, ${U}\left[ A_{e}\right] =A_{e}$, and
pain (desutility) to inconvenient to change, ${D}\left[ I_{e}\right] =I_{e}$.

\item \textbf{Advantages to changes. }They are modelized as the bifunction $%
A:M\times M\rightarrow R$. It is a constant function which does not depend
of experience $e\in {E}$.

\item \textbf{Losses to change. }They refer to the bifunction $F:M\times
M\rightarrow R$, where $F(z,y)=-A(z,y)$\textbf{.}

\item \textbf{Costs to be able to change and costs to be able to stay. }They are $%
C_{e}(z,y)\geq 0$ and $C_{e}(z,z)\geq 0$. Notice that costs to be able to
stay are not necessary zero.

\item \textbf{Inconvenients to change as Tikhonov regularization terms. }In the
context of this paper, we modelize inconvenients to change $%
I_{e}(z,y)=C_{e}(z,y)-C_{e}(z,z)\lessgtr 0$ as a Tikhonov perturbation term,
given by $I_{e}(z,y)=$ $\langle z-x,y-z\rangle $, where the experience $e=(x,z)$ of the agent concerns only his last two past actions, $x$ and $z$
(both have been done) 
It turns out that, in Behavioral Sciences, it is identical to the
\textquotedblleft Cosinus similarity" index, which is a measure of
similarity between the two vectors $u=z-x,$ and $y-z$ , where $v=z-x$
represents the present change in past experience, and $y-z$ the future
change. As a scalar product, it is equal, up to the sign (+, or -), to the
distance $d(x,z)$ between $x$ and $z$, time the distance $d(z,y_{p})$
between $z$ and the projection $y_{p}$ of $y$ on the lign $L(u)$ supported
by the vector $u=z-x,$ i.e, $I_{e}(z,y)=d(x,z)\times d(z,y_{p})\geq 0$ if $%
y_{p}$ is outside the segment $\left[ z^{\prime },x,z^{\prime \prime }\right]
$ on the lign $L(d)$, i.e., 
\[
y_{p}<-----\ z=z^{\prime }<------\ \ \ x----->z=z^{\prime \prime
}------>y_{p} 
\]%
and, $I_{e}(z,y)=-d(x,z)\times d(z,y_{p})\leq 0$ if $y_{p}$ is inside the segment 
$\left[ z^{\prime },x,z^{\prime \prime }\right] \ $ on the lign $L(d)$,
i.e., 
\[
z=z^{\prime }<-----\ y_{p}<------\ \ x----->y_{p}------>z=z^{\prime \prime } 
\]%
The interpretation is clear: having in mind that, $x=$ two periods less
recent past action, $z=$ one period most recent past action, and $y=\ $one
period future action, the costs to be able to change rather than to stay
increase (decrease) if the dissimilarity between the future action $y$ and
the second past action $x$ is higher (lower) than the dissimilarity between
the last past action $z$ and the past action $x$. This index shows in a
striking way how past experience matters much to determine future costs to
be able to change.

\item \textbf{Worthwhile changes}. Let $\Delta _{\lambda ,e}(z,y)=A(z,y)-\xi
I_{e}(z,y)=$ $-\left[ F(z,y)+\lambda I_{e}(z,y)\right] $ be the
\textquotedblleft worthwhile to change rather than to stay" payoff of the
agent. In  this paper, when  $F(z,y)=-A(z,y)$, $M=\mathbb{R}^{n}$ and $I_{e}(z,y)=$ $\langle z-x,y-z\rangle ,$ $\lambda >0$
is a satisficing worthwhile to change ratio, and $e=(x,z)\in {E}$ modelizes
experience (the sequence of two actions he has done before, including the
current action, done just before). The present paper considers the specific
case of short memory, where the agent \textquotedblleft remembers" only his
two last past actions $x,$ $z$. A change from the last past action $z$ (yet
done) to the new action $y$ (to be done), is worthwhile if $\Delta _{\lambda
,e}(z,y)\geq 0$. This means that advantages to change $A(z,y)$ from $z$ to $y$
are higher than $\lambda >0$ times the experience dependent inconvenients to
change $I_{e}(z,y)$, i.e, $A(z,y)\geq \lambda I_{e}(z,y)$ which is
equivalent to a non negative sum of losses to change plus the inconvenients
to change, $\ F(z,y)+\lambda I_{e}(z,y)\geq 0.$

\item \textbf{Successions of worthwhile changes. }At period $k,$ the two last past
actions and future actions are $x=x^{k-1},z=x^{k}$ and $y=x^{k+1}.$ Then, a
change from $z=x^{k}$ to $y=x^{k+1}$ is worthwhile if:
\[
\Delta _{\lambda
_{k+1,}e^{k}}(x^{k},x^{k+1})=A(x^{k},x^{k+1})-\lambda
_{k+1}I_{e^{k}}(x^{k},x^{k+1})\geq 0.
\]  
A succession $\left\{ x^{k}\right\} $of worthwhile single stays and changes
satisfies $\Delta _{\lambda _{k+1,}e^{k}}(x^{k},x^{k+1})\geq 0$ for all $%
k\in \mathbb{N}$. To fit with the mathematics, we can note $\lambda _{k}>0$ instead
of $\lambda _{k+1}>0.$ In the second mathematical case $\lambda _{k}>0$ is
given, and refers to an heritage from the past and current situations; In
the first behavioral case, the satisficing worthwhile to change ratio $%
\lambda _{k+1}>0$ is chosen, and refers to a future change from $z=x^{k}$
and $y=x^{k+1}.$

\item \textbf{Stationary trap (strong and weak).} A strong (resp. weak) stationary trap $x^{\ast }\in \Omega $\textbf{\ }
is such that, starting from $x^{\ast }$, there is no available worthwhile
change to be done:
\[
\Delta _{\lambda _{\ast },e^{\ast }}(x^{\ast },y)=A(x^{\ast },y)-\lambda
_{\ast }I_{e^{\ast }}(x^{\ast },y)<0,\qquad y\neq x^{\ast},\quad
(resp.\; \Delta_{\lambda _{\ast },e^{\ast }}(x^{\ast },y)\leq 0,\quad y\in \Omega).
\]

\item \textbf{Resolvents \ as subsets of weak stationary traps.}
Let us consider the following set:
\[
J_{\lambda }^{A}(x):=\left\{ z\in \Omega: 
A(z,y)\leq \lambda I_{e}(z,y), \; \forall y\in \Omega\right\},
\]
which is a variational rationality representation of resolvent of $F$ defined in \eqref{eq:resolventColao}.
They represent the set of weak stationary traps related
to experience $e=(x,z):=(z-x)\in E$.

\begin{lemma}[Resolvent Lemma]. If $z\in J_{\lambda }^{F}(x)$ and if advantages to changes are monotone, i.e, ($A(y,z)+A(z,y)\geq 0$ for
all $y,z\in \Omega $), then, it is worthwhile to change from $x$ to $z,$ i.e, 
$z\in W_{\lambda ,e}(x)$ where $e$ refers to $z-x$.
\end{lemma}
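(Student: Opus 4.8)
The plan is to unwind both memberships, $z\in J^F_\lambda(x)$ and $z\in W_{\lambda,e}(x)$, into scalar inequalities for $F$ (equivalently $A=-F$) and to bridge them with the monotonicity hypothesis. Throughout I work in the Euclidean setting $M=\mathbb{R}^n$ of this subsection, so that $\exp^{-1}_z x=x-z$ and $\|\exp^{-1}_z x\|=d(x,z)$. I first record that $I_e(z,y)=\langle z-x,y-z\rangle=-\langle \exp^{-1}_z x,\exp^{-1}_z y\rangle$, so that the variational-rationality resolvent $J^A_\lambda$ indeed coincides with $J^F_\lambda$ as defined in \eqref{eq:resolventColao}; this identification is what lets me read the hypothesis $z\in J^F_\lambda(x)$ as a statement about $F$.

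The first step is to extract the key scalar inequality from $z\in J^F_\lambda(x)$. By definition this gives $F(z,y)-\lambda\langle\exp^{-1}_z x,\exp^{-1}_z y\rangle\geq 0$ for every $y\in\Omega$; I would evaluate it at the admissible choice $y=x$ and use $\langle\exp^{-1}_z x,\exp^{-1}_z x\rangle=\|\exp^{-1}_z x\|^2=d^2(x,z)$ to obtain
\[
F(z,x)\geq \lambda\, d^2(x,z).
\]
Notice that this controls only $F(z,\cdot)$, i.e.\ advantages evaluated \emph{at} the trap $z$, whereas the worthwhile-change condition for the move $x\to z$ concerns $A(x,z)=-F(x,z)$. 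Converting one into the other is exactly where monotonicity enters.

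The second step is to invoke monotonicity of the advantage bifunction. Since $F=-A$, the assumption $A(y,z)+A(z,y)\geq 0$ is precisely $F(x,z)+F(z,x)\leq 0$, so $A(x,z)=-F(x,z)\geq F(z,x)$. Chaining with the previous display yields
\[
A(x,z)\geq F(z,x)\geq \lambda\, d^2(x,z).
\]
Finally I would translate this into the worthwhile-to-change payoff. For the change from $x$ to $z$ governed by the experience referring to $z-x$, the proposed displacement $z-x$ coincides with the experience direction, so the inconvenience term is $I_e(x,z)=\langle z-x,z-x\rangle=d^2(x,z)$. Hence
\[
\Delta_{\lambda,e}(x,z)=A(x,z)-\lambda I_e(x,z)\geq \lambda\, d^2(x,z)-\lambda\, d^2(x,z)=0,
\]
which is exactly $z\in W_{\lambda,e}(x)$, completing the argument.

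The main obstacle is interpretive rather than computational: one must correctly identify the inconvenience $I_e(x,z)$ attached to the \emph{reverse} change $x\to z$ under the experience vector $z-x$, namely $I_e(x,z)=d^2(x,z)$, since the first argument of $I_e$ here is the older action $x$ rather than the most recent one. Once this is pinned down, the proof is a two-line chain; the role of monotonicity is simply to upgrade the one-sided resolvent inequality $F(z,x)\geq\lambda d^2(x,z)$ into the bound on $A(x,z)$ demanded by the worthwhile-change inequality.
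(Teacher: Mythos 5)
Your proof is correct and takes essentially the same route as the paper's: specialize the resolvent inequality at $y=x$ to get $F(z,x)\geq \lambda\,d^2(x,z)$, upgrade this to $A(x,z)$ via monotonicity, and identify $I_e(x,z)=\langle z-x,z-x\rangle=d^2(x,z)$ so that $\Delta_{\lambda,e}(x,z)\geq 0$. Your version is in fact cleaner than the paper's displayed chain, which carries a sign slip --- it prints $F(z,x)\geq \lambda I_{e}(z,x)=-\lambda I_{e}(x,z)$ where it should read $F(z,x)\geq -\lambda I_{e}(z,x)=\lambda I_{e}(x,z)$, as in your argument, since only the latter yields the worthwhile-change inequality $A(x,z)\geq \lambda I_e(x,z)$.
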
 
\begin{proof} Take $y=x$. From $e=z-x$ and  $I_{e}(z,y)=\langle e,y-z\rangle$, we get
$$
I_{e}(z,x)=\langle z-x,x-z\rangle =-\left\Vert z-x\right\Vert
^{2}=-I_{e}(x,z).
$$
Hence, taking into account that $z\in J_{\lambda }^{F}(x)$, $\langle \exp^{-1}_{z}{x},\exp^{-1}_{z}x\rangle=\langle z-x, z-x\rangle$, $A=-F$ and given the monotonicity of $A$, we have
\[
A(x,z)\geq -A(z,x)=F(z,x)\geq \lambda I_{e}(z,x)=-\lambda I_{e}(x,z),
\]
and the affirmation is proved.
\end{proof}

\item \textbf{A strong (weak) variational trap }$x^{\ast}\in \Omega$ is such that, given a worthwhile to change process $\{x^{k}\}$ converging to $x^{\ast}$, we have:
\begin{itemize}
\item [i)] $\Delta_{\lambda_{k+1,}e^{k}}(x^{k},x^{k+1})=A(x^{k},x^{k+1})-\lambda_{k+1}I_{e^{k}}(x^{k},x^{k+1})\geq 0, \qquad k\in \mathbb{N}$;  
\item [ii)] $\Delta_{\lambda_{\ast }, e^{\ast }}(x^{\ast},y)=A(x^{\ast },y)-\lambda_{\ast }I_{e^{\ast }}(x^{\ast },y)<0,\qquad y\neq x^{\ast }.$
\end{itemize}
\end{itemize}
\subsection{The Case of an Hadamard Manifold}
\begin{itemize}
\item \textbf{The consideration of regeneration of resources constaints}. As said
before, the context of Hadamard manifolds is fundamental for the
consideration of dynamic problems. It allows to consider regeneration of
resource constraints, which are almost always neglected in dynamic models,
where the state space is an Euclidian space.

\item \textbf{The modelization of inconvenients to change.} The main
\textquotedblleft variational rational" concept whose formulation changes when the space of actions moves from $M=\mathbb{R}^{n}$ to an
Hadamard manifold is the inconvenient to change function 
$I_{e}(z,y)=C_{e}(z,y)-C_{e}(z,z)$ which passes from 
\[
I_{e}(z,y)=C_{e}(z,y)-C_{e}(z,z)=\langle z-x,y-z\rangle \;\; \mbox{to}\;\; I_{e}(z,y)=\langle \exp _{z}^{-1}x,\exp _{z}^{-1}y\rangle.
\]
The interpretation is exactly the same as before and needs not to be repeated
because (see Proposition~\ref{T:lawcos}) the comparison theorem for
triangles establishes a diffeomorphism between a geodesic triangle $\Delta
(x,y,z)$ on the Hadamard manifold and a corresponding one, $\Delta
(x^{\prime },y^{\prime },z^{\prime })$ in the Euclidian space $E=\mathbb{R}%
^{n}$. Then, $\langle \exp _{z}^{-1}x,\exp _{z}^{-1}y\rangle $ $%
=d(x,z)d(y,z)\cos \alpha .$ As before costs to be able to change depend of
geodesic distances.

\item \textbf{The variational trap problem.}

We only need to verify that, moving from a weak stationary trap to a new
one, is a worthwhile change. Then, the Euclidian versions of Theorems~\ref{teo3} and \ref{teo4} show that limits of a succession of worthwhile changes $\left\{
x^{k}\right\} $ from a weak stationary trap $x^{k}\in J_{\lambda
_{k}}^{A_{k-1}}(x^{k-2})$ to the next $x^{k+1}\in J_{\lambda
_{k+1}}^{A_{k}}(x^{k-1})$ converges towards an equilibrium $x^{\ast }\in
\Omega $.

The conditionality assumption (18) is a weak sharp minimum condition. It
supposes that $x^{\ast }$ is a weak variational trap, related to the new
resistance to change function $R(z,y)=\mbox{dist}^{\rho}(x,S)$; see the next
paragraph.

The resolvant Lemma shows that it is worthwhile to move from a weak
variational trap to the next.

\item \textbf{Finite termination} is a very important property for a model which
wants to modelize human behavior in a nice way, because, in the long run, we
are dead! Convergence in infinity time is of no use to describe a goal
directed behavior who requires to hope to reach a goal in finite time.
\end{itemize}

\subsection{The Variational Rationality Flavor of all the Hypothesis Done in
this Paper.}

Let us show that all the hypothesis made in this paper have a strong
variational rationality flavor. 
The variational rationality (VR) approach considers, among other basic concepts, advantages
to change $A(x,y)$ from $x$ to $y$. Losses to change from $x$ to $y,$ $%
F(x,y),$ refer to the opposite, the losses to change from $x$ to $y$ which
is $F(x,y)=-A(x,y).$ Within the (VR) approach, the equilibrium problem in a
Riemannian context is: 
\[
\mbox{find} \quad x^{\ast }\in \Omega:\quad A(x^{\ast
},y)\leq 0, \qquad y\in \Omega.
\]
Then, an equilibrium $x^{\ast }\in
\Omega $ is such that there is no advantages to change (there is no gain to
deviate from $x^{\ast }$), or there are losses to change, moving from it to
an other position. Definition 3.1 considers a loss function $F:\Omega \times
\Omega \longmapsto \mathbb{R}$. This loss function $F$ is said to be,
\begin{itemize}
\item [(1)] monotone, iff $F(x,y)+F(y,x)\leq 0$ for all $x,y\in \Omega $, i.e $%
A(x,y)+A(y,x)\geq 0$ for all $x,y\in \Omega$. This means that, moving from $%
x $ to $y$ and coming back, cannot generate a global loss to change along
this cycle (in term of advantage to change). Notice that costs to be able to
change are excluded from the very definition of advantages to change;

\item [(2)] pseudomonotone, iff for each $(x,y)\in \Omega \times \Omega ,F(x,y)\geq
0 $ \ implies $F(y,x)\leq 0$, i.e., $A(x,y)\leq 0$ \ implies $A(y,x)\geq 0.$%
This means that it is always advantageous to move from $x$ to $y$ or the
reverse. This is a no strong indecision hypothesis;

\item [(3)] $\theta $-undermonotone, iff there exists $\theta \geq 0$ such that \[
F(x,y)+F(y,x)\leq \theta d(x,y)^{2}
\], i.e., $A(x,y)+A(y,x)\geq -\theta
d(x,y)^{2},$ for all $(x,y)\in \Omega \times \Omega$. This means that the
global advantage to change from $x$ to $y$ and to come back cannot be too
low, the more actions $x,y$ are similar, the less it is. This is a low
resistance to change hypothesis.
\end{itemize}
These three properties are trivially true in the separable and experience
independent advantages to change case, $A(x,y)=g(y)-g(x)=f(x)-f(y)$, i.e $%
F(x,y)=g(x)-g(y)=f(y)-f(x),$ where $g$ and $f$ are the ``to be increased" or
"to be decreased" payoff functions, for a single agent. In the general case $%
\ F=-A$ is pseudomonotone if, having an advantage to change from $x$ to $y,$
there is no advantage to change from $y$ to $x$. Consider now the case of
interacting agents $I$ playing a Nash non cooperative game (see \cite{CLMM2012}), where the profile of their actions is $%
x=(x_{1},x_{2},..,x_{i-1},x_{i},x_{i+1},....x_{m}).$ In this setting, player 
$i\in I$ performs an action $x_{i}\in K_{i},$ while his rivals $-i=(j\neq i)$
carry out the other actions $x_{-i}=(x_{j},j\neq i),$ and each player $i$
considers his unsatisfied need function $%
f_{i}(x_{1},x_{2},..,x_{i-1},x_{i},x_{i+1},....x_{m})=f_{i}(x).$

Let $F(x,y)=\Sigma _{i\in I}\left[
f_{i}(x_{1},x_{2},..,x_{i-1},y_{i},x_{i+1},....,x_{m})-f_{i}(x_{1},x_{2},..,x_{i-1},x_{i},x_{i+1},....x_{m})%
\right] $ be the perceived global loss function of the players, where each
player is supposed to be able to only perceive what will be his loss $%
L_{i}((x_{i},x_{-i}),(y_{i},x_{-i}))=\left[
f_{i}(x_{1},x_{2},..,x_{i-1},y_{i},x_{i+1},....,x_{m})-f_{i}(x_{1},x_{2},..,x_{i-1},x_{i},x_{i+1},....x_{m})%
\right] ,$ if he is the only agent to move from the profile of old actions $%
(x_{1},x_{2},..,x_{i-1},x_{i},x_{i+1},....x_{m})=(x_{i},x_{-i})$ to the new
profile of actions $%
(x_{1},x_{2},..,x_{i-1},y_{i},x_{i+1},....x_{m})=(y_{i},x_{-i})$, instead of
choosing to stay there, at $x=(x_{i},x_{-i}),$ given that he supposes that
the other players repeat their old actions.

In regard to the assumptions $\mathcal{H}$1, $\mathcal{H}$2, $\mathcal{H}$3 on the loss functions, we have
\begin{itemize}
\item[$\mathcal{H}$1)] $F(x,x)=0$ for each $x\in \Omega $ means \ that there
is no loss to stay;

\item[$\mathcal{H}$2)] For every $x\in \Omega $, $y\mapsto F(x,y)$ is convex
and lower semicontinuous, i.e., the advantages to change function $%
y\longmapsto A(x,y)$ is concave and upper semicontinuous;

\item[$\mathcal{H}$3)] For every $y\in \Omega $, $x\mapsto F(x,y)$ is upper
semicontinuous, i.e., the advantages to change $x\mapsto A(x,y)$ is lower semicontinuous.
\end{itemize}
Lower (upper) and upper (lower) semicontinuity assumptions are natural
regularity assumptions for loss (gain) functions. Only convexity (concavity)
of the loss (gain) function $y\mapsto F(x,y)$ needs some comments. Given the
past action $x$, $A(x,y)=-F(x,y)$ is the gain to move from $x$ to $y.$
Concavity of $A(x,\cdot)$ refers to the traditional assumption of decreasing
marginal gains (a standard saturation effect).

Consider now the remaining hypothesis. It is not easy to give a variational
interpretation of Assumption~\ref{assumption-13-1}. However, let us notice that when $F$ is
pseudomonotone, Assumption \ref{assumption-13-1} is satisfied. Then, for an interpretation,
we will refer to pseudomonotonicity. Now,  Assumption~\ref{assumption-13-2} means that any (unbounded) sequence $\left\{ z^{k}\right\} \subset\Omega$ whose distance to an initial position goes to infinity, have, in
term of the (VR) approach, an aspiration point such that, after some time $%
k\geq k_{0}$, there is an advantage to change from any $z^{k}$ to the
aspiration point $x^{\ast }$ (see Soubeyran~\cite{S2010}, Flores et al.~\cite{FLS2012} and Luc and Soubeyran~\cite{LS2013}, for applications). This aspiration
point represents a desirable end for an unbounded sequence going to
infinity. Its existence means that for any trajectory which goes far away
(to infinity), agents can hope to improve their current situation. This is
strongly related to the so called theory of hope (Snyder~\cite{S1994}).

The last hypothesis, given in Definition 4.2, consider well conditionned
bifunctions. It supposes that a non degenerated $F$ is $\rho $-conditioned,
i.e, that there exist positive number $\rho >0$ and $\tau >0$ such that $%
-F(x,P_{S}(x))\geq \tau \mbox{dist}^{\rho }(x,S),x\in \Omega $, where $S=EP(F,\Omega )$%
. This hypothesis is very natural within the variational rationality
context. It means that it is always worthwhile to change from any point $%
x\in $ $\Omega $ to the projection $P_{S}(x)$ of $x$ on the subset $%
S=EP(F,\Omega )$ of equilibrium points. In this case,\ advantages to
change from $x$ to $P_{S}(x)$ are $A(x,P_{S}(x))=-F(x,P_{S}(x))$ and
resistance to change is $R(x,y)=D\left[ \mbox{dist}(x,S)\right]$. This supposes
that any change from $x\in \Omega $ to $P_{S}(x)$ $\in S=EP(F,\Omega )$ is
worthwhile, in a specific sense: $A(x,P_{S}(x))\geq \tau \mbox{dist}^{\rho }(x,S),$
for all $x\in \Omega .$

\subsection{Exact And Approximate Solutions: Reversing The Logic}

In this Section 5, we have shown that the equilibrium problem, with a
Tikhonov regularization term, on an Hadamard manifold, is a particular and
nice instance of a very general variational trap problem, which appears in
Behavioral Sciences in Psychology, Economics, Management Sciences, Game
theory, Decision theory, Philosophy, Artificial Intelligence, Political
Sciences.

The ``variational rationality" approach (see \cite{S2009,S2010}) reverses the
logic of what is an exact or inexact (approximate) solution, taking the
point of view of, either Behavioral Sciences or Mathematics. If we interpret
the famous Tikhonov perturbation term $\langle z-x,y-z\rangle $ of a
pertubed Nash equilibrium problem as a cosinus measure of dissimilarity of
two vectors in an inner product space, and, then, as a dissimilarity cost to
change from one direction of change to an other one, then:
\begin{itemize}
\item [1)] in Behavioral Sciences, the (VR) approach, where costs to be able to
change play a major role and modelize inertia, frictions and learning, the
natural solutions of a perturbed Nash equilibrium problem are variational
traps, reachable, in a worthwhile way, as maximal elements of a relation of
worthwhile changes, not Nash equilibria, which ignore costs to be able to
change (frictions), once you are there, at the Nash equilibrium (where
frictions are absent, but why?). The exact solutions become variational
traps. The justification is that they include costs to be able to change in
their definition. The approximate solutions become Nash equilibria which
ignore costs to be able to change in their strict definition;

\item [2)] in Mathematics, perturbed Nash equilibrium problems have been seen in the
opposite way: variational traps (as solutions of perturbed Nash equilibrium
problems) are seen as approximate solutions of the exact solutions of a Nash
equilibrium problem, with a perturbation term.
\end{itemize}
\section{Conclusions}

In this paper, we provided a sufficient condition to obtain the existence of solutions of EP. We \mbox{presented} a proximal algorithm for EP whose iterative process has been \mbox{considered} in \cite[Theorem 4.10]{CLMM2012}. Our convergence analysis is restricted to Hadamard manifold with null sectional curvature and extends the convergence result presented in \cite{CLMM2012} to the case where the \mbox{bifunction} of EP is not necessarily monotone. We obtained a condition on the bifunction that retrieves the notion of weak sharp minima, and we prove the finite termination of any sequence \mbox{generated} from our iterative process. In particular, the finite termination result presented in~\cite{BC2012} is extended to \mbox{minimization} problems whose constrained set is not necessarily  the whole manifold. We also obtain a finite convergence result for the proximal point method in order to find singularities of single valued \mbox{monotone} vector fields (see~\cite{CFPN2006, LLMM2009} and, hence, for the  variational inequality problem (see~\cite{N2003}). We gave an \mbox{application} to a recent unifying approach of a lot of stability and change theories in Behavioral \mbox{Sciences}, the \textquotedblleft Variational rationality approach of human behavior'', where an equilibrium problem appears to \mbox{modelize} how an agent can reach  a final equilibrium, following a sequence of \mbox{worthwhile} changes and temporary stays from a stationary trap to the next one. As future work, we intend to propose a approach of the proximal point algorithm for equilibrium problems to case that $M$ is an Hadamard manifold with negative sectional curvature, also we intend to establish necessary and sufficient conditions for the hypothesis of our Theorem~\ref{teo4} to occur.

\end{document}